\newcommand{\pl}[1]{\foreignlanguage{polish}{#1}}
\newcommand{\innprod}[2]{\left< #1, #2 \right>}
\newcommand{\R}{\mathbb{R}}
\newcommand{\RR}{\mathbb{R}}
\renewcommand{\C}{\mathbb{C}}
\newcommand{\Z}{\mathbb{Z}}
\definecolor{green}{rgb}{0,0.5,0}
\newtheorem{theorem}{Theorem}[section]
\newtheorem{lemma}[theorem]{Lemma}
\newtheorem{corollary}[theorem]{Corollary}
\theoremstyle{remark}
\newtheorem{remark}{Remark}
\theoremstyle{definition}
\numberwithin{equation}{section}
\title[A dimension-free estimate on $L^2$ for the maximal Riesz transform]{A dimension-free estimate on $L^2$ for the maximal Riesz transform in terms of the Riesz transform}
\author{Maciej Kucharski}
\address{Maciej Kucharski\\
	Instytut Matematyczny\\
	Uniwersytet \pl{Wroc{\lll}awski}\\
	Plac Grun\-waldzki 2/4\\
	50-384 \pl{Wroc{\lll}aw}\\
	Poland}
\email{mkuchar@math.uni.wroc.pl}
\author{B{\l}a{\.z}ej Wr{\'o}bel}
\address{ B{\l}a{\.z}ej Wr{\'o}bel\\
	Instytut Matematyczny\\
	Uniwersytet \pl{Wroc{\lll}awski}\\
	Plac Grun\-waldzki 2/4\\
	50-384 \pl{Wroc{\lll}aw}\\
	Poland}
\email{blazej.wrobel@math.uni.wroc.pl}
\subjclass[2020]{42B25, 42B20, 42B15}
\keywords{Riesz transform, maximal function, dimension-free estimates}
\thanks{Both authors were supported by the National Science Centre (NCN), Poland  research project Preludium Bis 2019/35/O/ST1/00083.}
\begin{document}
	\selectlanguage{english}
	\begin{abstract}
	W prove a dimension-free estimate for the $L^2(\R^d)$ norm of the  maximal truncated Riesz transform in terms of the $L^2(\R^d)$ norm of the Riesz transform. Consequently, the vector of maximal truncated Riesz transforms has a dimension-free estimate on $L^2(\R^d).$  We also show that the maximal function of the vector of truncated Riesz transforms has a dimension-free estimate on all $L^p(\R^d)$ spaces, $1<p<\infty.$
	\end{abstract}
	
	\maketitle

	\section{Introduction}
\label{sec:int}

The Riesz transforms $R_j,$ $j=1,\ldots,d,$ are an archetype of singular integral operators on $\R^d$. For a Schwartz function $f$ they are defined by
\begin{equation} \label{eq:riesz}
	R_j f(x) = \lim_{t \to 0^+} R_j^t f(x), 
\end{equation}
where
\begin{equation} \label{eq:Rt}
	R_j^t f(x) = \frac{\Gamma\left( \frac{d+1}{2} \right)}{\pi^{\frac{d+1}{2}}} \int_{\abs{x-y} > t} \frac{x_j - y_j}{\abs{x-y}^{d+1}} f(y) \, dy
\end{equation}
is the truncated Riesz transform. The proof of the boundedness of the Riesz transforms on $L^p(\R^d)$ spaces for $1<p<\infty$ is a prime example of the Calder\'on--Zygmund theory of singular integral operators initiated in the seminal paper \cite{CZ1}. An important ingredient of the proof is the $L^2(\R^d)$ boundedness of $R_j$, which follows directly from  Plancherel's theorem and the well-known formula
\begin{equation}
	\label{eq:ftRiesz}
	\widehat{R_j f}(\xi)=-i\frac{\xi_j}{|\xi|} \widehat{f}(\xi),\qquad \xi \in \R^d;
\end{equation}
where $\hat{g}$ denotes the (unitary) Fourier transform of $g\in L^2(\R^d).$

An application of the Calder\'on--Zygmund theory shows that the $L^p(\RR^d)$ norm of $R_j$ grows at most exponentially in the dimension $d.$ In \cite{stein1} E. M. Stein proved that the vector of Riesz transforms
$(R_1f,\ldots,R_df)$
has $L^p(\R^d)$  bounds which are independent of the dimension. More precisely,
\begin{equation}
	\label{eq:Riesz0}
	\norm{\bigg(\sum_{j=1}^d |R_jf|^2\bigg)^{1/2}}_{L^p(\R^d)}\leqslant C_p\, \|f\|_{L^p(\R^d)},\qquad 1<p<\infty,\end{equation}
 where $C_p$ is independent of the dimension $d.$ Note that in view of \eqref{eq:ftRiesz} the inequality \eqref{eq:Riesz0} is clear on $L^2(\R^d)$ with $C_2=1.$ Later it was realized that for $1<p<2$ one may take $C_p\leqslant C (p-1)^{-1}$ in \eqref{eq:Riesz0}, where $C$ is a universal constant, see \cite{Ba1}, \cite{rubio}. Still the optimal constant in \eqref{eq:Riesz0} remains unknown when $d\geqslant 2;$ the best results to date are given in \cite{BW1} (see also \cite{DV}) and \cite{IM1}.

The maximal truncated Riesz transform is the maximal function counterpart of the singular integral \eqref{eq:riesz} defining $R_j.$ Namely, for $j=1,\ldots,d$ we set
\begin{equation*} 
	R_j^* f(x) = \sup_{t>0} |R_j^t f(x)|. 
\end{equation*}
Clearly, for $f$ being a Schwartz function and $x\in \R^d$ we have the pointwise inequality $R_j^*f(x)\geqslant R_j f(x).$ In \cite[Theorem 1]{mateu_verdera} Mateu and Verdera proved that, up to a multiplicative constant, also the reverse inequality holds in the $L^p(\R^d)$ norm. Namely, they showed that there exists a constant $C_{p,d}$ such that
\begin{equation} \label{eq:mat_ver}
	\norm{R_j^* f}_{L^p(\R^d)}\leqslant C_{p,d} \norm{R_j f}_{L^p(\R^d)}
\end{equation}
for all $f\in L^p(\R^d)$ with $1<p<\infty.$ We remark that in dimensions $d\geqslant 2$ inequality \eqref{eq:mat_ver} does not follow from classical Cotlar's inequality applied to the Riesz transform
	\begin{equation*}
		R_j^* f\leqslant \mathcal M (R_j f)+ C_d\, \mathcal M(f),
	\end{equation*}
see e.g.\ \cite[Theorem 5.3.4]{grafakos}. This is because of the presence of the second term $\mathcal M(f)$ which contains only the Hardy--Littlewood maximal function $\mathcal M$ and not the Riesz transform $R_j.$

The proof of \eqref{eq:mat_ver} given in \cite{mateu_verdera} produces a constant $C_{p,d}$ of an exponential order $C_p\cdot 4^d,$ where $C_p$ depends only on $p.$ In the present paper we prove that for $p=2$ one may take a universal constant.

\begin{theorem} \thlabel{thm}
For every $f \in L^2(\R^d)$ we have
	\begin{equation}
		\label{eq:thm}
		\norm{R_j^* f}_{L^2(\R^d)} \leqslant 2\cdot 10^8 \norm{R_j f}_{L^2(\R^d)}.
	\end{equation}
\end{theorem}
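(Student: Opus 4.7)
Our approach is to view the truncated Riesz transform as a radial Fourier multiplier applied to the full Riesz transform, and then to control the resulting maximal multiplier on $L^2(\R^d)$ by universal constants. The starting point is the Fourier identity
\begin{equation*}
\widehat{R_j^t f}(\xi) = -i\,\frac{\xi_j}{|\xi|}\,\Psi(t|\xi|)\,\hat f(\xi) = \Psi(t|\xi|)\,\widehat{R_j f}(\xi),
\end{equation*}
which follows from the rotational structure of the truncated kernel (odd in $z_j$, otherwise rotationally invariant). The single radial profile $\Psi:[0,\infty)\to\R$ is independent of $j$, satisfies $\Psi(0)=1$, $\Psi(\infty)=0$, and admits an explicit Bessel-function representation $\Psi(s) = 2^{d/2}\Gamma(d/2+1)\int_s^\infty u^{-d/2}J_{d/2}(u)\,du$. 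Writing $g = R_j f$ and $\widehat{m_t}(\xi) = \Psi(t|\xi|)$, the theorem becomes the dimension-free maximal inequality
\begin{equation*}
\Big\|\sup_{t>0}|m_t * g|\Big\|_{L^2(\R^d)} \leq C\,\|g\|_{L^2(\R^d)}.
\end{equation*}

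To prove this maximal inequality, we would split $\Psi = \Psi_{\mathrm{av}} + \Psi_{\mathrm{cor}}$, where $\Psi_{\mathrm{av}}$ is the Fourier profile of a positive radial averaging kernel (such as the normalized indicator of a ball, so that $\Psi_{\mathrm{av}}(tD)g$ is a ball average of $g$ at scale $t$), and $\Psi_{\mathrm{cor}}$ is a smooth correction vanishing at $0$ and at $\infty$. The averaging contribution is then handled by Stein's dimension-free maximal inequality for ball averages on $L^2(\R^d)$. For the correction we apply the elementary pointwise identity
\begin{equation*}
\sup_{t>0}|h(t)|^2 \leq |h(\infty)|^2 + 2\int_0^\infty |h(s)|\,|s\,h'(s)|\,\frac{ds}{s}
\end{equation*}
to $h(s) = (\Psi_{\mathrm{cor}}(sD)g)(x)$, then integrate in $x$ and apply Cauchy--Schwarz together with Plancherel. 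This reduces the estimate to the two one-dimensional quantities $\int_0^\infty|\Psi_{\mathrm{cor}}(u)|^2\,du/u$ and $\int_0^\infty|u\Psi_{\mathrm{cor}}'(u)|^2\,du/u$, which are manifestly dimension-free as they are integrals on the half-line $(0,\infty)$.

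The main obstacle will be the quantitative execution of this split. At $s=0$ the profile $\Psi$ equals $1$, so the naive integral $\int_0^\infty|\Psi(u)|^2\,du/u$ diverges logarithmically; at $s=\infty$, asymptotic analysis of $J_{d/2}$ shows that $s\Psi'(s)$ oscillates at order one, so $\int^\infty|s\Psi'(s)|^2\,du/u$ also diverges. Hence the decomposition must be chosen so that (i) $\Psi_{\mathrm{av}}$ is a genuine averaging multiplier admitting an explicit dimension-free maximal bound, and (ii) $\Psi_{\mathrm{cor}}$ together with $s\Psi_{\mathrm{cor}}'(s)$ belongs to $L^2(ds/s)$ with bounds independent of $d$. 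Concretely this requires exploiting the cancellation between the sharp truncation at radius $t$ and the chosen averaging kernel, so that the resulting correction vanishes to first order at both endpoints with controlled derivatives, uniformly in $d$. A possible refinement is to further decompose $\Psi_{\mathrm{cor}} = \sum_k a_k\chi_k$ into Littlewood--Paley or Gaussian-type pieces whose individual maximal functions carry dimension-free constants, and sum using $\sum_k|a_k|<\infty$. The large absolute constant $2\cdot 10^8$ in \thref{thm} reflects the accumulation of universal constants from Stein's maximal inequality, the pointwise Sobolev step, and the Plancherel identities, rather than any sharp quantity.
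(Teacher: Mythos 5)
Your first step coincides with the paper's: the identity $\widehat{R_j^t f}=\Psi(t|\xi|)\widehat{R_jf}$ is exactly the factorization $R_j^t=M^t(R_j)$ of Corollary \ref{cor:fact}, and on $L^2$ it reduces the theorem, via Plancherel, to a dimension-free bound for the maximal radial multiplier $\sup_{t>0}|M^tf|$. (Minor slip: your normalizing constant $2^{d/2}\Gamma(d/2+1)$ does not give $\Psi(0)=1$; the correct factor is $2^{d/2}\Gamma(\frac{d+1}{2})/\sqrt{\pi}$ as in \eqref{eq:m}.) From that point on your route differs from the paper's: you propose a continuous Cotlar-type square-function argument, writing $\sup_t|h(t)|^2\leqslant 2\int_0^\infty |h|\,|sh'|\,ds/s$ and comparing with ball averages, whereas the paper compares the dyadic scales $M^{2^n}$ with the Poisson semigroup and controls the intermediate scales by the Bourgain-style inequality \eqref{eq:ineq} together with the Poisson projections $S_n$. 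Your route is viable in principle and would even be somewhat shorter, since for the $L^2$ statement the continuous $g$-function reduction is cleaner than the dyadic one.

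The genuine gap is that the entire quantitative core is left open, and it is precisely the part that makes the theorem nontrivial. Everything hinges on uniform-in-$d$ estimates for the profile $\Psi=m$ and its derivative: one needs $|m(x)-1|\lesssim x/\sqrt{d}$ for $x\leqslant\sqrt{d}$, $|m(x)|\lesssim\sqrt{d}/x$ for $x\geqslant\sqrt{d}$, and $|xm'(x)|\lesssim 1$ with absolute constants (the paper's \thref{lem2}, \thref{lem3} and \thref{lem4}, which rest on a delicate uniform bound for $J_{\nu}$, \thref{lemJ}). Without these, neither your correction term $\Psi_{\mathrm{cor}}$ nor its derivative can be shown to lie in $L^2(du/u)$ with dimension-free norm, and you explicitly defer this as ``the main obstacle.'' You also misdiagnose where the difficulty sits: for $d\geqslant 2$ one has $xm'(x)\sim C_d\,x^{(1-d)/2}\cos(\cdot)$ at infinity, so $\int^{\infty}|sm'(s)|^2\,ds/s$ converges for each fixed $d$ (the order-one oscillation you describe occurs only for $d=1$, i.e.\ the Hilbert transform); the issue at infinity is solely uniformity in $d$, while the genuine divergence is at $s=0$ and is removed by subtracting any approximate identity whose symbol is $1+O(u^2/d)$ near the origin. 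So the architecture is sound and parallel to the paper's, but as written the proposal records a plan rather than a proof: the dimension-free Bessel-function analysis that both your argument and the paper's require is missing.
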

\begin{remark}
	There is nothing special in $2\cdot 10^8 $ and clearly, optimizing our method, one may get a better constant instead. We wrote down an explicit constant in order to get an impression of its magnitude.
\end{remark}

\begin{remark}
	The theorem is true for all dimensions $d$, however we restrict our proof to the case $d \geqslant 4$ due to technical reasons and from now on we assume that $d \geqslant 4$. The case $1 \leqslant d \leqslant 3$ follows from \cite[Theorem 1]{mateu_verdera}.
\end{remark}

Theorem \ref{thm} is the main result of our paper. It is an example of a control of a singular integral by its maximal truncated singular integral. This theme has been developed in \cite{bmo1, mopv1, mov1} for broad classes of singular integral operators and we kindly refer the reader to these papers for more details. We remark that for stating a result with a dimension-free control of norms one needs a definition of a family of operators in each dimension $d.$ For this purpose the Riesz transforms seem the most natural candidates. We do not know if a dimension-free control as in \eqref{eq:thm} is feasible between other singular integral operators and their maximal counterparts.

Note that Theorem \ref{thm} combined with Plancherel's theorem and \eqref{eq:ftRiesz} easily implies a dimension-free bound for the norm of the vector of maximal Riesz transforms on $L^2(\RR^d).$ 
\begin{corollary}
	\label{cor:vecmaxL2}
	For every $f \in L^2(\R^d)$ we have
	\[
		\norm{\bigg( \sum_{j=1}^d \abs{R_j^* f}^2 \bigg)^{1/2}}_{L^2(\R^d)} \leqslant 2\cdot 10^8 \norm{f}_{L^2(\R^d)}.
	\]
\end{corollary}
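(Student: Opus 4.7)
The plan is to exploit the orthogonality built into the $L^2$ norm of the vector of maximal Riesz transforms, then reduce matters to Theorem~\ref{thm} componentwise and finish via Plancherel.

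First I would use Fubini/Tonelli to square things out and swap sum with integral:
\begin{equation*}
\norm{\bigg( \sum_{j=1}^d \abs{R_j^* f}^2 \bigg)^{1/2}}_{L^2(\R^d)}^2 = \sum_{j=1}^d \norm{R_j^* f}_{L^2(\R^d)}^2.
\end{equation*}
This is the step that loses nothing and rewrites the vector-valued problem as a sum of scalar problems.

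Next I would apply Theorem~\ref{thm} to each summand, yielding $\norm{R_j^* f}_{L^2(\R^d)}^2 \leqslant (2\cdot 10^8)^2 \norm{R_j f}_{L^2(\R^d)}^2$, and therefore
\begin{equation*}
\sum_{j=1}^d \norm{R_j^* f}_{L^2(\R^d)}^2 \leqslant (2\cdot 10^8)^2 \sum_{j=1}^d \norm{R_j f}_{L^2(\R^d)}^2.
\end{equation*}
The crucial feature here is that the constant from Theorem~\ref{thm} is dimension-free, so summing $d$ copies does not introduce any dependence on $d$ beyond what is absorbed by the Riesz transforms themselves on the right-hand side.

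Finally I would compute the right-hand side using Plancherel's theorem and the Fourier multiplier formula \eqref{eq:ftRiesz}: since $\widehat{R_j f}(\xi) = -i (\xi_j/|\xi|) \widehat{f}(\xi)$, we obtain
\begin{equation*}
\sum_{j=1}^d \norm{R_j f}_{L^2(\R^d)}^2 = \sum_{j=1}^d \int_{\R^d} \frac{\xi_j^2}{|\xi|^2} |\widehat{f}(\xi)|^2 \, d\xi = \int_{\R^d} |\widehat{f}(\xi)|^2 \, d\xi = \norm{f}_{L^2(\R^d)}^2,
\end{equation*}
since $\sum_{j=1}^d \xi_j^2/|\xi|^2 = 1.$ Combining the three displays and taking square roots gives the claimed bound with constant $2\cdot 10^8.$ There is no real obstacle here: the only nontrivial input is Theorem~\ref{thm}, and the dimension-freeness of the final constant comes for free because the telescoping identity $\sum_j \xi_j^2/|\xi|^2 = 1$ exactly cancels the factor of $d$ that one might have feared from summing $d$ terms.
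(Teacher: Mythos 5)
Your proof is correct and is exactly the argument the paper intends: square the vector $L^2$ norm to reduce to a sum of scalar terms, apply Theorem~\ref{thm} to each, and then use Plancherel together with \eqref{eq:ftRiesz} and the identity $\sum_j \xi_j^2/|\xi|^2=1$ to collapse $\sum_j\|R_jf\|_2^2$ to $\|f\|_2^2$.
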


In the course of investigating the relation between the Riesz transforms and the maximal truncated Riesz transforms we also dealt with the maximal function associated with the vector of truncated Riesz transforms, i.e. the operator defined as
\begin{equation*}
\sup_{t > 0} \abs{R^t f} \coloneqq \sup_{t > 0} \bigg( \sum_{j=1}^d \abs{R_j^t f}^2 \bigg)^{1/2}.
\end{equation*}
Note that the difference between $\left( \sum_{j=1}^d \abs{R_j^* f}^2 \right)^{1/2}$ and $\sup_{t > 0} \abs{R^t f}$ lies in the placement of the supremum. It turns out that the latter operator   is significantly easier to estimate. In particular we have a dimension-free bound for its $L^p(\RR^d)$ norm for all $p \in (1, \infty)$.

\begin{theorem}
	\thlabel{thm2}
For each $p \in (1, \infty)$ there exists a constant $C_p > 0$ independent of the dimension $d$ and such that
	\[
	\norm{\sup_{t > 0} \left( \sum_{j=1}^d \abs{R_j^t f}^2 \right)^{1/2}}_{L^p(\RR^d)} \leqslant C_p \norm{f}_{L^p(\RR^d)}
	\]
	for all $f \in L^p(\R^d)$.
\end{theorem}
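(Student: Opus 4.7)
The plan is to compare each truncated Riesz transform to a smoothly regularised version obtained by pre-convolving with a nice approximation to the identity. Take $P_t$ to be the Poisson kernel at scale $t$ and decompose
\[ R_j^t f = (R_j f) * P_t + U_j^t f, \qquad U_j^t f := R_j^t f - (R_j f) * P_t, \]
using that the Riesz transforms commute with convolution. The first summand is handled by the pointwise bound
\[ \Bigl( \sum_j |(R_j f) * P_t(x)|^2 \Bigr)^{1/2} = |(Rf) * P_t(x)| \le (|Rf|) * P_t(x) \le P^\ast (|Rf|)(x), \]
where $P^\ast g(x) := \sup_{t>0} (|g| * P_t)(x)$ is the Poisson maximal operator. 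Stein's dimension-free $L^p$ bound on $P^\ast$, combined with his dimension-free bound $\|(\sum_j |R_j f|^2)^{1/2}\|_p \le C_p \|f\|_p$ from \eqref{eq:Riesz0}, yields $\|\sup_t (\sum_j |(R_j f) * P_t|^2)^{1/2}\|_p \le C_p \|f\|_p$ with $C_p$ independent of $d$.

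The residual $U^t f$ is the main obstacle. By the rotational symmetry of the truncated Riesz kernel, one has $\widehat{R_j^t f}(\xi) = -i(\xi_j/|\xi|)\,\psi(2\pi t |\xi|)\,\hat f(\xi)$ for a scalar function $\psi$ with $\psi(0) = 1$ and $\psi(u) \to 0$ as $u \to \infty$, so the vector Fourier multiplier of $U^t$ equals $-i(\xi/|\xi|)\rho(2\pi t|\xi|)$ where $\rho(u) := \psi(u) - e^{-u}$ vanishes at both $u = 0$ and $u = \infty$. Inserting this into the $g$-function style inequality
\[ \sup_t |U^t f(x)|^2 \le 2 \Bigl( \int_0^\infty |U^s f(x)|^2 \tfrac{ds}{s} \Bigr)^{\!1/2} \Bigl( \int_0^\infty s\, |\partial_s U^s f(x)|^2 \,ds \Bigr)^{\!1/2}, \]
valid because $U^s f$ tends to $0$ as $s\to 0$ and as $s\to\infty$, Plancherel then produces a dimension-free $L^2$ bound, since $\int_0^\infty \rho(u)^2 \tfrac{du}{u}$ and $\int_0^\infty u\, \rho'(u)^2 \,du$ are finite universal constants. (For $p=2$ alone one could also invoke Corollary \ref{cor:vecmaxL2} directly.)

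Extending the dimension-free estimate on $\sup_t |U^t f|$ from $L^2$ to every $p \in (1,\infty)$ is the technical core. I would aim for a Cotlar-type pointwise inequality dominating $\sup_t |U^t f|(x)$ by a dimension-free multiple of a Hardy--Littlewood-type maximal function of $f$, exploiting the odd symmetry and smoothness of the residual kernel to avoid propagating the dimension-dependent factor $c_d = \Gamma((d+1)/2)\pi^{-(d+1)/2}$ that would spoil a naive $L^1$-norm estimate on the kernel. Combining such a pointwise bound with Stein's dimension-free $L^p$ bound for the Hardy--Littlewood maximal function would complete the proof.
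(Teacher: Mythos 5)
Your decomposition $R_j^t f=(R_jf)*P_t+U_j^tf$ and the treatment of the first piece are sound: by Minkowski's integral inequality $\bigl(\sum_j|(R_jf)*P_t|^2\bigr)^{1/2}\leqslant |Rf|*P_t\leqslant \mathcal M(|Rf|)$, and Stein's dimension-free maximal bound together with \eqref{eq:Riesz0} finishes that term (this is essentially the computation the paper itself records around \eqref{eq:Rj*P}--\eqref{eq:vecmaxLpPois}). The $L^2$ bound for the residual via the $g$-function interpolation is also plausible, since the relevant multiplier is $-i(\xi/|\xi|)\rho(t|\xi|)$ and \thref{lem2,lem3,lem4} give $|\rho(u)|\lesssim\min(u/\sqrt d,\sqrt d/u)$ and $|u\rho'(u)|\lesssim 1$ uniformly in $d$ (though you would still need to check convergence of $\int_0^\infty u\,\rho'(u)^2\,du$ at both endpoints with a $d$-free constant, which requires the decay of $J_{d/2}$, not just boundedness). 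But for $p=2$ the theorem is anyway immediate from Corollary \ref{cor:vecmaxL2}, so none of this is the issue.

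The genuine gap is the last paragraph: for $p\neq 2$ you do not prove anything, you only announce that you ``would aim for'' a Cotlar-type pointwise inequality $\sup_t|U^tf|\lesssim \mathcal M f$ with a dimension-free constant. That inequality is the entire content of the theorem on this route, and there is a concrete obstruction to obtaining it by kernel estimates: the kernel of $U_j^t$ is $K_j^t-Q_t^j$, and while each single $j$ satisfies $\|K_j^1-Q_1^j\|_{L^1}=O(1)$ (because $c_dS_{d-1}/\sqrt d=O(1)$), aggregating these scalar bounds into the $\ell^2$-valued quantity $\bigl(\sum_j|U_j^tf|^2\bigr)^{1/2}$ costs a factor $\sqrt d$. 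So a dimension-free bound requires exploiting cancellation \emph{across} the coordinates $j$, and you have not identified any mechanism for doing so; the paper's own remarks after Corollary \ref{cor:fact} warn that closely related dimension-free $L^p$ questions ($p\neq2$) for such residual maximal operators are open precisely because Cotlar-type arguments do not transfer. The paper avoids this entirely: its proof of \thref{thm2} never compares with the Poisson-regularized transform, but instead uses the method of rotations, writing $R_j^tf=\tfrac{c_d\pi}{2}\int_{S^{d-1}}\theta_j H_\theta^tf\,d\theta$, linearizing the $\ell^2$ norm by a unit vector $\lambda(x)$, applying H\"older on the sphere, and checking via Stirling that $c_dS_{d-1}\bigl(\int_{S^{d-1}}|\theta_1|^q\,d\theta/S_{d-1}\bigr)^{1/q}=O(1)$, so that everything reduces to the dimension-free $L^p$ bound for the one-dimensional maximal Hilbert transform. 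You should either supply an actual proof of your Cotlar-type inequality with the cross-coordinate cancellation made explicit, or switch to the rotation argument.
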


We shall now describe the methods employed in our paper, starting with \thref{thm}. There are two main ingredients used in its proof. Firstly, we need a factorization of the truncated Riesz transform $R_j^t=M^t(R_j)$, where $M^t,$ $t>0,$ is a family of multiplier operators. This is stated in Corollary \ref{cor:fact} and is a crucial part of our argument. Such a factorization does not seem to appear in the literature. However, it is related to \cite[Section 2]{mateu_verdera}, see Remark \ref{rem:factker} for details. An application of Corollary \ref{cor:fact} reduces \thref{thm} to estimating the $L^2(\RR^d)$ norm of the maximal operator corresponding to the operators $M^t,$ see  \thref{thm'}. The proof of this theorem is based on methods initiated by Bourgain in connection with dimension-free estimates for Hardy--Littlewood averaging operators over convex sets in \cite{bou1} and their later developments, see e.g.\ \cite{bourgain_wrobel}, \cite{mirek_stein}. In order to draw a dimension-free statement from these methods we need to justify appropriate dimension-free estimates for the multiplier symbols corresponding to $M^t,$ $t>0.$ This is accomplished in \thref{lem2,lem3,lem4}.

In order to prove  \thref{thm2} we use appropriately the method of rotations. This is a small variation on the paper \cite{rubio} by Duoandikoetxea and Rubio de Francia.

\section{Notation and Preliminaries} \label{sec:prel}
We abbreviate $L^p(\R^d)$ to $L^p$ and $\norm{\cdot}_{L^p}$ to $\norm{\cdot}_p$. For a sublinear operator  $T$ on $L^p$ we denote by $\|T\|_{p\to p}$ its norm. We  let $\mathcal S$ be the space of Schwartz functions on $\R^d.$ Slightly abusing the notation we say that a sublinear operator $T$  is bounded on $L^p$ if it is bounded on $\mathcal S$ in the $L^p$ norm. This is convenient since all the operators (linear and sublinear) considered in this paper are naturally defined on Schwartz functions. We often do not distinguish between a bounded operator $T$ on $\mathcal S \cap L^p$ and its unique bounded extension to $L^p.$ Moreover, if no explicit condition is imposed on a function $f$, we tacitly assume that $f\in \mathcal S.$   


We will use some inequalities involving the gamma function. The first one is a sharper variant of Stirling's formula \cite[5.6.1]{nist}
\begin{equation} \label{eq:stirling}
	\sqrt{2\pi} x^{x-\frac{1}{2}} e^{-x} \leqslant \Gamma(x) \leqslant \sqrt{2\pi} x^{x-\frac{1}{2}} e^{-x + \frac{1}{12x}}, \qquad x > 0.
\end{equation}
The second one is Gautschi's inequality \cite[5.6.4]{nist}
\begin{equation} \label{eq:gautschi}
	x^{1-s} < \frac{\Gamma(x+1)}{\Gamma(x+s)} < (x+1)^{1-s}, \qquad x > 0, s \in (0,1).
\end{equation}

In the proof of \thref{thm'} we shall need a numerical inequality (see e.g.\ \cite[Lemma 2.5]{mirek_stein}) which says that for any $n \in \Z$ and continuous function $g: [2^n, 2^{n+1}] \to \C$ we have
\begin{equation} \label{eq:ineq}
	\begin{aligned}
		\sup_{t \in [2^n, 2^{n+1}]} &\abs{g(t) - g(2^n)} \\
		&\leqslant \sqrt{2} \sum_{l = 0}^\infty \left( \sum_{m=0}^{2^l-1} \abs{g(2^n + 2^{n-l}(m+1)) - g(2^n + 2^{n-l}m)}^2 \right)^{1/2}.
	\end{aligned}
\end{equation}

Throughout the paper $c_d$ denotes the constant
\begin{equation}
	\label{eq:cd}
	c_d = \frac{\Gamma\left( \frac{d+1}{2} \right)}{\pi^{\frac{d+1}{2}}}
\end{equation}
and $K_j^t$  denotes the kernel
\[
	K_j^t(x) = c_d \chi_{\abs{x} > t}(x) \frac{x_j}{\abs{x}^{d+1}},\qquad x\in \R^d,
\]
where $j=1,\ldots,d$ and $t>0.$
Then $R_j^t f = K_j^t * f$ so that $K_j^t$ is the kernel of the $j$-th truncated Riesz transform defined in \eqref{eq:Rt}.

The Fourier transform is defined for $\xi\in\R^d$  by the formula
\[
\widehat{f}(\xi) = \int_{\R^d} f(x) e^{-2 \pi i x \cdot \xi} dx.
\]

Let $P_t$ be the Poisson semigroup  defined for $f\in L^2$ by
\begin{equation} \label{eq:poisson}
	\widehat{P_t f}(\xi) = p_t(\xi) \widehat{f}(\xi) \qquad \textrm{ with }\quad p_t(\xi) = e^{-t \frac{\abs{\xi}}{\sqrt{d}}}.
\end{equation}
We denote by $P_*(f)$ and $g(f)$ the maximal function and the square function associated with this semigroup, i.e.
\begin{equation*} 
	P_*f(x) = \sup_{t > 0} \abs{P_t f(x)}  \quad \text{and} \quad g(f)(x) = \left( \int_0^\infty t \abs{\frac{d}{dt} P_t f(x)}^2 dt \right)^{1/2}.
\end{equation*}
From \cite[pp. 47--51]{stein2} and \cite[Theorem VIII.7.7]{dunford_schwartz} we know that for  $f \in L^2$ we have
\begin{equation} \label{eq:P*g}
	\norm{P_* f}_2 \leqslant 4 \norm{f}_2 \quad \text{and} \quad \norm{g(f)}_2 \leqslant \frac{1}{\sqrt{2}} \norm{f}_2.
\end{equation}
We will also need the so-called Poisson projections given by
\[
	S_n = P_{2^{n-1}} - P_{2^n}, \quad n \in \Z.
\]
The sequence $(S_n)_{n \in \Z}$ is then a resolution of the identity on $L^2$ which means that
\begin{equation} \label{eq:Sn}
	f = \sum_{n \in \Z} S_n f, \quad f \in L^2.
\end{equation}
Moreover, $S_n$ satisfies
\[
	S_n f(x) = -\int_{2^{n-1}}^{2^n} \frac{d}{dt} P_tf(x) dt,
\]
so by the Cauchy--Schwartz inequality we have
\begin{align*}
	\abs{S_n f(x)}^2 \leqslant 2^{n-1} \int_{2^{n-1}}^{2^n} \abs{\frac{d}{dt} P_tf(x)}^2 dt \leqslant \int_{2^{n-1}}^{2^n} t \abs{\frac{d}{dt} P_tf(x)}^2 dt.
\end{align*}
Now summing over $n \in \Z$ and using \eqref{eq:P*g} leads us to the conclusion that
\begin{equation} \label{eq:Snnorm}
	\norm{\left( \sum_{n \in \Z} \abs{S_n f}^2 \right)^{1/2}}_2 \leqslant \frac{1}{\sqrt{2}} \norm{f}_2.
\end{equation}

Before we proceed let us mention that there is another way of defining the Riesz transform which results in a different maximal transform. However, as we will see, this case is easier to deal with. Let 
\[
P_t(x) = \frac{c_d}{2\pi\sqrt{d}} \frac{t}{\left( \left( \frac{t}{2\pi\sqrt{d}} \right)^2 + \abs{x}^2 \right)^\frac{d+1}{2}} \qquad \text{and} \qquad Q_t^j(x) = c_d \frac{x_j}{\left( \left( \frac{t}{2\pi\sqrt{d}} \right)^2 + \abs{x}^2 \right)^\frac{d+1}{2}}
\]
be the {Poisson kernel} and the {conjugate Poisson kernel}, recall that $c_d$ was defined in \eqref{eq:cd}. Note that we rescale the usual definition of the Poisson kernel to match \eqref{eq:poisson}. Their Fourier transform are
\[
\widehat{P_t}(\xi) = e^{-t \frac{\abs{\xi}}{\sqrt{d}}} \qquad \text{and} \qquad \widehat{Q_t^j}(\xi) = -i \frac{\xi_j}{\abs{\xi}} e^{-t \frac{\abs{\xi}}{\sqrt{d}}},
\]
which, together with the fact that the family $(P_t)_{t > 0}$ is an approximate identity, immediately implies that for $f\in \mathcal S$ we have  $Q_t^j * f = R_j f * P_t \xrightarrow{t \to 0^+} R_j f$ pointwise. Thus, an equivalent {definition} of the Riesz transform is $R_j f(x) = \lim_{t \to 0^+} Q_t^j * f(x)$. This leads to the maximal Riesz transform
\[
R_j^{*,P} f(x) = \sup_{t > 0} \abs{Q_t^j * f(x)}.
\]

Now observe that the kernels $(P_t)_{t > 0}$ are $L^1$ dilations of an integrable radially decreasing function $P_1$ of $L^1$ norm equal to $1$, so by \cite[Theorem 2.1.10]{grafakos} we have
\begin{equation}
\label{eq:Rj*P}
R_j^{*,P} f = \sup_{t > 0} \abs{Q_t^j * f} = \sup_{t > 0} \abs{R_j f * P_t} \leqslant \norm{P_1}_1\cdot  \mathcal{M}(R_jf)=\mathcal{M}(R_jf),
\end{equation}
where $\mathcal{M}$ is the centered Hardy--Littlewood maximal operator. In view of a dimension-free estimate of $\mathcal{M}$ due to Stein \cite[Theorem 13]{SteinMax} (see also \cite{SteinStro}) this means that the inequality
\[
\norm{R_j^{*,P} f}_p \leqslant C_p \norm{R_j f}_p
\]
holds for $p \in (1, \infty]$ with the constant $C_p$ independent of the dimension $d$. Moreover, a recent result of Deleaval and Kriegler \cite[Theorem 1]{deleaval} provides dimension-free bounds for the vector-valued Hardy--Littlewood maximal operator. Hence, using \cite[Theorem 1]{deleaval} together with \eqref{eq:Rj*P} and \eqref{eq:Riesz0} we obtain
\begin{equation}
	\label{eq:vecmaxLpPois}
\norm{\left( \sum_{j=1}^d \abs{R_j^{*,P} f}^2 \right)^{1/2}}_p \leqslant C_p \norm{f}_p
\end{equation}
for $p \in (1, \infty)$ with $C_p$ independent of the dimension $d$.

Proving a version of \eqref{eq:vecmaxLpPois} with $R_j^{*,P}$ replaced by $R_j^*$ seems a much harder task. 


\section{A factorization of the truncated Riesz transform and multiplier estimates} \label{sec:lemmas}
First we state and prove results regarding the multipliers associated with the truncated Riesz transforms. Let
$m: [0, +\infty) \to \C$ be the function
\begin{equation} \label{eq:m}
	m(x) = \frac{2^{\frac{d}{2}} \Gamma\left( \frac{d+1}{2} \right)}{\sqrt{\pi}} \int_{2\pi x}^\infty r^{-\frac{d}{2}} J_{\frac{d}{2}}(r) dr.
\end{equation}
For $\Re \nu > -\frac{1}{2}$ the symbol $J_\nu$ denotes the Bessel function of the first kind defined by
\begin{equation}
	\label{eq:besdef}
	J_\nu(t) = \frac{t^\nu}{2^\nu \Gamma\left( \nu + \frac{1}{2} \right) \sqrt{\pi}} \int_{-1}^1 e^{its} \left(1 - s^2 \right)^{\nu - \frac{1}{2}} ds, \qquad t \geqslant 0,
\end{equation}
see e.g.\ \cite[B.1]{grafakos}. It is known that for $\nu \geqslant 0$ the Bessel function satisfies ${\abs{J_\nu(t)} \leqslant 1}$ (see \cite[10.14.1]{nist}) and $\abs{J_\nu(t)} \leqslant C(\nu) t^{\nu}$ (see \cite[10.14.4]{nist}). Using the assumption that $d \geqslant 4$ we thus see that \eqref{eq:m} defines a bounded continuous function on $[0,\infty).$

\begin{lemma} \thlabel{lem1}
For each $t>0$ the multiplier associated with the $j$-th truncated Riesz transform defined in \eqref{eq:Rt} equals
	\[
		\widehat{K_j^t}(\xi) = -i \frac{\xi_j}{\abs{\xi}} m(t \abs{\xi}),
	\]
	where $\xi\in\R^d,$ $\xi\neq 0.$ 
\end{lemma}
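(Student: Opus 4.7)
The plan is to represent $K_j^t$ as a product of a solid spherical harmonic of degree one and a radial function, and then invoke the Bochner--Hecke identity for the Fourier transform. Specifically, write
\[
K_j^t(x) = x_j \cdot f_0(|x|), \qquad f_0(r) = c_d\, \chi_{(t,\infty)}(r)\, r^{-(d+1)}.
\]
The polynomial $P_j(x) = x_j$ is harmonic and homogeneous of degree $1$, and $K_j^t \in L^1(\R^d)$ since $\int_{|x|>t} |x|^{-d}\, dx < \infty$. Hence the Bochner--Hecke identity (see e.g.\ \cite{grafakos}) applies and gives
\[
\widehat{K_j^t}(\xi) = -i\, \xi_j\, F(|\xi|), \qquad F(s) = 2\pi\, s^{-d/2} \int_0^\infty f_0(r)\, J_{d/2}(2\pi rs)\, r^{d/2+1}\, dr,
\]
so the task reduces to identifying $F(s)$ with $m(ts)/s$.

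Substituting the formula for $f_0$ and using $r^{-(d+1)} \cdot r^{d/2+1} = r^{-d/2}$ yields
\[
F(s) = 2\pi c_d\, s^{-d/2} \int_t^\infty r^{-d/2}\, J_{d/2}(2\pi rs)\, dr.
\]
A change of variables $u = 2\pi rs$, for which $dr = du/(2\pi s)$ and $r^{-d/2} = (2\pi s)^{d/2} u^{-d/2}$, collapses all the powers of $s$ into a single $s^{-1}$:
\[
F(s) = (2\pi)^{d/2}\, c_d\, s^{-1} \int_{2\pi ts}^\infty u^{-d/2}\, J_{d/2}(u)\, du.
\]
Since $c_d = \Gamma((d+1)/2)/\pi^{(d+1)/2}$, the constant $(2\pi)^{d/2} c_d$ simplifies to $2^{d/2}\Gamma((d+1)/2)/\sqrt{\pi}$, which is precisely the normalizing factor in the definition \eqref{eq:m} of $m$. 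Hence $F(s) = m(ts)/s$, and therefore
\[
\widehat{K_j^t}(\xi) = -i\, \xi_j\, \frac{m(t|\xi|)}{|\xi|} = -i\, \frac{\xi_j}{|\xi|}\, m(t|\xi|).
\]

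No essential obstacle is foreseen here; the whole argument reduces to invoking Bochner--Hecke, a single change of variables, and bookkeeping of constants. The only point requiring a brief justification is convergence of the integrals: the integrand in the definition of $m$ is bounded near $0$ thanks to $|J_{d/2}(r)| \leqslant C r^{d/2}$, and decays like $r^{-(d+1)/2}$ at infinity thanks to $|J_{d/2}(r)| \leqslant C r^{-1/2}$, which is integrable for $d \geqslant 2$. An analogous elementary check ensures the legitimacy of the change of variables and of the passage through polar coordinates underlying the Bochner--Hecke formula applied to $K_j^t$.
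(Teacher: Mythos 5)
Your route is correct and genuinely different from the paper's. The paper factors out the radial part, writing $K_j(x) = x_j K(x)$ with $K(x) = c_d\,\chi_{|x|>1}(x)\,|x|^{-(d+1)}$, applies the scalar radial Fourier formula to $K$ (which \emph{is} in $L^1$), and then differentiates the resulting function $h(|\xi|)$ using the Bessel recurrence \eqref{eq:Beseq}; it also first reduces to $t=1$ by scaling. You instead invoke the Bochner--Hecke identity for the degree-one solid harmonic $P_j(x)=x_j$ directly on $K_j^t$, which bundles the differentiation step into a single formula and avoids the Bessel recurrence. Both roads lead to the same constant bookkeeping and the same answer, and your computation and identification of the constants is correct.

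There is, however, one genuine error in the justification: the claim that $K_j^t\in L^1(\R^d)$ is false. Since $|x_j|\le|x|$ gives only $|K_j^t(x)|\lesssim \chi_{|x|>t}|x|^{-d}$, and $\int_{|x|>t}|x|^{-d}\,dx=\omega_{d-1}\int_t^\infty r^{-1}\,dr=\infty$, the kernel $K_j^t$ is \emph{not} integrable; in fact its angular average over spheres vanishes, which is exactly why the truncated Riesz transform makes sense despite this. What is true, and what suffices, is $K_j^t\in L^2(\R^d)$ (since $|K_j^t|^2\lesssim\chi_{|x|>t}|x|^{-2d}$, which is integrable for $d\ge 1$). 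The Bochner--Hecke identity extends to $L^2$ by a standard density argument, and the integral appearing in your $F(s)$ converges absolutely at infinity by the $|J_{d/2}(r)|\lesssim r^{-1/2}$ decay, so the proof goes through once you replace the false $L^1$ claim by the $L^2$ membership (or, alternatively, truncate at a large radius $R$, apply Bochner--Hecke in $L^1$, and pass to the limit $R\to\infty$). It is worth noting that the paper sidesteps this issue precisely by applying the radial Fourier formula to the genuinely $L^1$ function $K$ rather than to $K_j$ itself.
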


\begin{proof}
	First observe that $K_j^t(x) = K_j^1 \left( \frac{x}{t} \right) t^{-d}$ which means that $\widehat{K_j^t}(\xi) = \widehat{K_j^1}(t \xi)$ and we can focus on $K_j \coloneqq K_j^1$. Then we write
	\[
		K_j(x) = c_d x_j \chi_{\abs{x}>1}(x) \frac{1}{\abs{x}^{d+1}} = x_j K(x) \qquad{\rm with}\qquad K(x) := c_d \chi_{\abs{x}>1}(x)\frac{1}{\abs{x}^{d+1}}
	\]
	so that $\widehat{K_j} = -\frac{1}{2\pi i} \partial_j \widehat{K}$. Since $K$ is radial, its Fourier transform $\widehat{K}$ is also radial and has the form $\widehat{K}(\xi) = h(\abs{\xi})$, where
	\begin{equation} \label{eq:h}
		h(x) = 2 \pi c_d\, x^{-\frac{d}{2}+1} \int_1^\infty r^{-\frac{d}{2}-1} J_{\frac{d}{2}-1} (2\pi r x) dr,
	\end{equation}
  see e.g.\ \cite[B.5]{grafakos}.
Recalling the estimate $\abs{J_{d/2-1}(x)} \leqslant 1$ we see that for $x>0$  the integral in \eqref{eq:h} is convergent and the function $h$ is well defined. Since by \cite[10.6.6]{nist} the Bessel function satisfies 
	\begin{equation}
		\label{eq:Beseq}
		\frac{1}{x} \frac{d}{dx} \frac{J_\alpha(x)}{x^\alpha} = -\frac{J_{\alpha+1}}{x^{\alpha+1}},
	\end{equation}
	differentiating \eqref{eq:h} for $x>0$ we obtain 
	\[
		h'(x) = -c_d (2\pi)^{\frac{d}{2}+1} \int_{2\pi x}^\infty r^{-\frac{d}{2}} J_{\frac{d}{2}}(r) dr.
	\]
	 Passing with the derivative under the integral sign in \eqref{eq:h}  can be easily justified with the aid of \eqref{eq:Beseq}. 
	In summary we have proved that
	\[
		\widehat{K_j}(\xi) = -\frac{1}{2 \pi i}\frac{\xi_j}{\abs{\xi}} h'(\abs{\xi}) = -i \frac{\xi_j}{\abs{\xi}} \left( -\frac{1}{2\pi} h'(\abs{\xi}) \right)
	\]
	and noticing that $-h'(|\xi|)=2\pi m(|\xi|)$ completes the reasoning.
\end{proof}

Let $M^t,$ $t>0,$ be defined by 
 \begin{equation}
 	\label{eq:Mtdef}
 \widehat{M^t f}(\xi) = m(t\abs{\xi})\widehat{f}(\xi),\qquad f\in L^2,
\end{equation}
 and set 
  \begin{equation}
  	\label{eq:Mtmaxdef}
 M^* f(x) = \sup_{t > 0} \abs{M^t f(x)}.
\end{equation} 
Since $m$ is a bounded function, Plancherel's theorem implies that $M^t$ defines a bounded operator on $L^2.$  Moreover, since $m$ is continuous, we see that if $f\in \mathcal S$, then for each $x\in \R^d$ the mapping $t\mapsto M^tf(x)$ is continuous. In particular for such $f$ the supremum in the definition of $M^* f(x)$ may be restricted to rational numbers, which shows that the function $M^*f(x)$ is Borel measurable.  

 As a Corollary of \thref{lem1} we shall obtain a factorization of $R_j^t$ in terms of $M^t$ that is crucial for our purposes, see Corollary \ref{cor:fact}. For its proof we need a lemma on the density of $R_j(L^p)$ in $L^p.$ For $p=2$ this is an easy consequence of Plancherel's theorem and \eqref{eq:ftRiesz}. The general case $1<p<\infty$ is justified below.
 
\begin{lemma} \thlabel{lem:dens}
	Let $1 < p < \infty$ and $j=1,\ldots,d.$ Then the space $R_j(L^p)\cap \mathcal S$ is dense in $L^p$. In particular $R_j(L^p)$ is dense in $L^p.$ 
\end{lemma}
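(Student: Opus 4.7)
The plan is to construct an explicit subspace $\mathcal{S}_*\subset R_j(L^p)\cap\mathcal{S}$ and show it is dense in $L^p$. Define
\[
\mathcal{S}_*=\bigl\{\phi\in\mathcal{S}:\widehat\phi\text{ is smooth, compactly supported, and vanishes in a neighbourhood of the hyperplane }\{\xi_j=0\}\bigr\}.
\]
For the inclusion, given $\phi\in\mathcal{S}_*$ I will define $h$ via $\widehat h(\xi)=i\abs{\xi}/\xi_j\,\widehat\phi(\xi)$. Since the support of $\widehat\phi$ is compact and avoids $\{\xi_j=0\}$, the factor $i\abs{\xi}/\xi_j$ is smooth and bounded there, so $\widehat h$ is smooth and compactly supported, hence $h\in\mathcal{S}\subset L^p$. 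Formula \eqref{eq:ftRiesz} then gives $R_j h=\phi$, placing $\phi$ in $R_j(L^p)\cap\mathcal{S}$.

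For density I will fix $f\in L^p$, approximate it first by some $g\in\mathcal{S}$, and then smoothly cut off $\widehat g$ away from both infinity and the hyperplane. Choose $\rho_1\in C_c^\infty(\R^d)$ equal to $1$ near the origin, and $\rho_2\in C^\infty(\R)$ vanishing on $[-1,1]$ and equal to $1$ outside $[-2,2]$, and set $\widehat{\psi_N}(\xi)=\rho_1(\xi/N)\rho_2(N\xi_j)\widehat g(\xi)$; then $\psi_N\in\mathcal{S}_*$. Decomposing
\[
1-\rho_1(\xi/N)\rho_2(N\xi_j)=\bigl(1-\rho_1(\xi/N)\bigr)+\rho_1(\xi/N)\bigl(1-\rho_2(N\xi_j)\bigr),
\]
I write $g-\psi_N=A_N+B_N$. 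The piece $A_N$ tends to zero in $L^p$ for every $p$ by integration by parts exploiting the Schwartz decay of $\widehat g$, which produces $\abs{A_N(x)}\leqslant C_M(1+\abs{x})^{-M}\varepsilon_N$ for any $M$ with $\varepsilon_N\to 0$.

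The hard part will be showing $\norm{B_N}_p\to 0$. Letting $K,\Phi\in\mathcal{S}$ satisfy $\widehat K=1-\rho_2$ (on $\R$) and $\widehat\Phi=\rho_1$ (on $\R^d$), one can write $B_N=(g*_j K_N)*\Phi_N$, where $*_j$ denotes one-dimensional convolution in the $x_j$-variable with the spreading kernel $K_N(s)=N^{-1}K(s/N)$, and $\Phi_N(x)=N^d\Phi(Nx)$ is an $L^1$-normalised dilate. By Young's inequality $\norm{B_N}_p\leqslant\norm{\Phi}_1\norm{g*_j K_N}_p$, and Fubini reduces matters to the one-dimensional statement $\norm{u*K_N}_{L^p(\R)}\to 0$ for $u\in L^p(\R)$, $p>1$. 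This I would prove for $u$ compactly supported by interpolating the vanishing bound $\norm{u*K_N}_\infty\leqslant\norm{u}_{p'}\norm{K_N}_p=O(N^{1/p-1})$ against the uniform bound $\norm{u*K_N}_1\leqslant\norm{u}_1\norm{K}_1$, and then extending by density. The restriction $p>1$ is essential here, because $K_N$ has constant $L^1$ norm yet spreads out, so the one-dimensional claim fails at $p=1$.
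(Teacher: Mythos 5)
Your argument is correct, but it follows a genuinely different route from the paper. The paper does not work on the Fourier side at all: it proves the (formally stronger) statement that $(R_j)^2(L^p)\cap\mathcal S$ is dense, by writing $T^j_tf-f=-(R_j)^2\bigl(\int_0^tT^j_s(\Delta f)\,ds\bigr)$ for the one-dimensional heat semigroup $T^j_t$ acting in the $x_j$-variable, using the identity $(R_j)^2(\Delta f)=-\partial_j^2f$, and then letting $t\to\infty$; the restriction $p>1$ enters through the $L^p$ boundedness of the semigroup maximal function, which justifies $\norm{T^j_tf}_p\to0$. Your construction instead exhibits an explicit dense class $\mathcal S_*$ of Schwartz functions whose Fourier transforms avoid $\{\xi_j=0\}$, inverts the multiplier $-i\xi_j/\abs{\xi}$ directly on that class, and handles density by a two-scale cut-off; this is more self-contained (no semigroup theory), at the price of the convolution limit $\norm{u*K_N}_{L^p(\R)}\to0$ for the spreading kernels $K_N$, which is exactly where $p>1$ enters for you. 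Two small points to tidy up: in the $L^\infty$ bound you should pair $\norm{u}_p$ with $\norm{K_N}_{p'}=O(N^{-1/p})$ (or restrict to bounded compactly supported $u$), since a general compactly supported $u\in L^p$ need not lie in $L^{p'}$ when $p<2$; and the Fubini reduction needs the dominated convergence theorem in the $x'$-variables, using the uniform bound $\norm{K_N*g_{x'}}_{L^p(\R)}\leqslant\norm{K}_1\norm{g_{x'}}_{L^p(\R)}$. Both are routine.
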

\begin{proof}
	Throughout the proof we fix $1<p<\infty$ and $j=1,\ldots,d.$ It is sufficient to prove that $(R_j)^2(L^p)\cap \mathcal S$ is dense in $L^p$ and this is our goal. Here the symbol $(R_j)^2$ denotes the two-fold composition $(R_j)^2=R_j \circ R_j.$ 
	
	For $t>0$ and $f \in L^1+L^{\infty}$  denote
	\begin{equation*}
	T_t^j f(x)=(4\pi t)^{-1/2} \int_{\R} \exp (-|x_j-y_j|^2/4t) f(x_1,\ldots,x_{j-1},y_j,x_{j+1},\ldots,x_d)\,dy_j.
\end{equation*}
Then $\{T_t^j\}_{t>0}$ is the heat semigroup on $\R$ applied to the $j$-th coordinate of $\R^d.$ It is a symmetric diffusion semigroup in the sense of Stein \cite[Chapter 3]{stein2}. Applying the Fourier transform for $f\in \mathcal S$  we obtain
\begin{equation}
	\label{eq:FTtj}
	\widehat{T_t^j f}(\xi)=\exp(-4\pi^2 t\xi_j^2)\widehat{f}(\xi),\qquad \xi \in \R^d.
\end{equation} 
In particular if $f\in \mathcal S$, then also  $T_t^j f\in \mathcal S.$


Take  $f\in \mathcal S.$ It is easy to show  (see \cite[Proposition 5.1.17]{grafakos}) that
	\begin{equation}
		\label{eq:RijLap}
		(R_j)^2 (\Delta f)=-\partial_j^2 f,
\end{equation}
	where $\Delta$ denotes the Laplacian on $\R^d.$ Using the Fourier inversion formula together with \eqref{eq:FTtj} and \eqref{eq:RijLap} we obtain for each $t>0$
	\begin{align} \label{eq:lemdens}
		T_t^j f - f &= \int_0^t T_s^j (\partial_j^2  f) ds = -\int_0^t T_s^j ((R_j)^2(\Delta f)) ds = -(R_j)^2\left(\int_0^t T_s^j (\Delta f)\,ds\right).
	\end{align}
The integrals in \eqref{eq:lemdens} are Bochner integrals on $L^2.$ Since $f\in \mathcal S$, we see that $\lim_{t\to \infty}T_t^j f=0$ both pointwise and in the $L^2$ norm.    
	Now invoking the $L^p$ boundedness of the maximal operator $f \mapsto \sup_{t > 0} \abs{T_t^j f}$ (see \cite[Chapter III, Section 3]{stein2}) and the dominated convergence theorem we deduce that also ${\lim_{t \to \infty} \norm{T_t^j f}_p = 0}.$
	Thus, denoting $g_t= \int_0^t T_s^j (\Delta f)\,ds$ and coming back to \eqref{eq:lemdens} we see that $(R_j)^2(g_t)\in \mathcal S$ and 
	\[\lim_{t\to \infty}\norm{(R_j)^2(g_t)-f}_p=0.\]
	Noticing that $g_t \in \mathcal S$ we conclude that any $f\in \mathcal S$ may be approximated arbitrarily close in the $L^p$ norm by an element of $(R_j)^2(\mathcal S)\cap \mathcal S.$ At this point the density of $\mathcal S$ in $L^p$ completes the proof.
	
\end{proof}

Having proved \thref{lem:dens} we now have all the ingredients for justifying the factorization announced earlier. Recall that the operators $M^t$ and $M^*$ are defined by \eqref{eq:Mtdef} and \eqref{eq:Mtmaxdef}, respectively.

\begin{corollary}
	\label{cor:fact}
	Let $j=1,\ldots,d.$ Then for each $t>0$ the  truncated Riesz transform factorizes as
	\begin{equation} \label{eq:Mt}
		R_j^t f = M^t(R_j f), \qquad f\in L^2.
	\end{equation}
	Moreover the maximal operator $M^*$ is bounded on all $L^p$ spaces, $1<p<\infty$, and the optimal constant $C_p$ in the inequality $\norm{R_j^* f}_p \leqslant C_p\norm{R_j f}_p$ equals $\norm{M^*}_{p \to p}$.
\end{corollary}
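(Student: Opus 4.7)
My plan is to first establish the factorization \eqref{eq:Mt}, which I expect to follow directly from \thref{lem1} and Plancherel's theorem: for $f \in L^2$ the Fourier transforms of the two sides agree,
\[
	\widehat{R_j^t f}(\xi) = -i\frac{\xi_j}{\abs{\xi}} m(t\abs{\xi}) \widehat{f}(\xi) = m(t\abs{\xi}) \widehat{R_j f}(\xi) = \widehat{M^t(R_j f)}(\xi),
\]
so $R_j^t f = M^t(R_j f)$ in $L^2$. Taking the supremum over $t > 0$ then yields the pointwise identity $R_j^* f(x) = M^*(R_j f)(x)$ for $f \in L^2$, which is the decisive link between the two norms.

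Next I would deduce the inequality $C_p \leq \norm{M^*}_{p\to p}$. For $f \in \mathcal S$ the pointwise identity above gives $\norm{R_j^* f}_p = \norm{M^*(R_j f)}_p \leq \norm{M^*}_{p\to p} \norm{R_j f}_p$, interpreting $M^*$ on $L^p$ once we have extended it (see below) and using that $R_j f \in L^p$ by Calder\'on--Zygmund. The reverse inequality $\norm{M^*}_{p\to p} \leq C_p$ is where \thref{lem:dens} becomes essential. Given $g \in \mathcal S$, I would apply the construction from the proof of \thref{lem:dens} to produce functions $h_n \in L^p \cap L^2$ (of the form $R_j g_{t_n}$ with $g_{t_n} \in \mathcal S$) such that $g_n := R_j h_n \in \mathcal S$ and $g_n \to g$ both in $L^p$ and in $L^2$. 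The factorization then yields $M^* g_n = R_j^* h_n$, hence $\norm{M^* g_n}_p = \norm{R_j^* h_n}_p \leq C_p \norm{R_j h_n}_p = C_p \norm{g_n}_p \to C_p \norm{g}_p$.

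The final and most delicate step is transferring these bounds on the approximants to $\norm{M^* g}_p$ itself, and I expect this to be the main obstacle. My plan is to exploit the $L^2$-boundedness of each $M^t$, built into its definition via the bounded continuous multiplier $m$, to conclude that $M^t g_n \to M^t g$ in $L^2$ for each fixed $t$. A diagonal argument then produces a subsequence along which $M^t g_n(x) \to M^t g(x)$ a.e.\ simultaneously for all $t$ in a countable dense subset of $(0,\infty)$. The continuity of $t \mapsto M^t g(x)$ for Schwartz $g$, noted just after \eqref{eq:Mtmaxdef}, allows the supremum defining $M^* g$ to be restricted to this countable set, whence $M^* g(x) \leq \liminf_n M^* g_n(x)$ a.e., and Fatou's lemma finishes the reverse inequality. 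Boundedness of $M^*$ on $L^p$ is then automatic from this equality of constants combined with the (dimension-dependent) finiteness of $C_p$ provided by \cite[Theorem 1]{mateu_verdera}.
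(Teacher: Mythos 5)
Your proposal is correct and follows essentially the same route as the paper: the factorization \eqref{eq:Mt} via \thref{lem1} and \eqref{eq:ftRiesz}, then a density argument based on \thref{lem:dens} combined with \eqref{eq:mat_ver} to identify the two operator norms. The one place where you add material is the passage from boundedness on the dense subset $R_j(L^p)\cap\mathcal S$ to boundedness on all of $\mathcal S$ (via $L^2$-convergence of $M^tg_n$, a diagonal subsequence, the continuity in $t$, and Fatou); the paper treats this transfer as immediate, and your more careful spelling-out of it is a welcome clarification rather than a departure.
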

\begin{proof}
Recalling \eqref{eq:ftRiesz} the decomposition \eqref{eq:Mt} follows immediately from \thref{lem1}.
	
When studying the $L^p$ boundedness of $M^*$ by \thref{lem:dens} it suffices to consider $M^* g$ with $g\in R_j(L^p) \cap \mathcal S.$ Note that for such $g$ the function $M^*g$ is measurable. Clearly, \eqref{eq:Mt} implies $C_p \leqslant \norm{M^*}_{p \to p}$. Applying  \eqref{eq:mat_ver} we see that $M^*$ is bounded on $R_j(L^p)\cap \mathcal S,$ which is a dense subset of $L^p$ by \thref{lem:dens}.  Thus, using again \eqref{eq:Mt} we see that $\norm{M^*}_{p \to p} \leqslant C_p.$ This completes the proof of the corollary.
	
\end{proof}

\begin{remark}
	\label{rem:factker}
	Using the argumentation in \cite[Section 2]{mateu_verdera} it is possible to obtain the factorization in Corollary \ref{cor:fact} on the kernel level. Namely, for $t>0,$ we have $	M^t f= h_t * f ,$ where $h_t(x)=-t^{-d}h(x/t)$ with $h$ being the function
	$
	h=\sum_{j=1}^d R_j(K_j^1),
	$
	see \cite[eq.\ (4), p.\ 959]{mateu_verdera}.  However, working at the kernel level makes it harder to obtain dimension-free estimates.
	
\end{remark}
\begin{remark}
	Corollary \ref{cor:fact} reduces the question of the $L^p$ control  of $R_j^*$ by $R_j$ to the boundedness of the maximal operator $M^*$ with equality of norms. As we noted in the proof of the corollary inequality \eqref{eq:mat_ver} implies that $M^*$ is indeed bounded on each $L^p,$ $1<p<\infty.$ However, \eqref{eq:mat_ver} only gives an estimate of an order $C_p \cdot 4^d$ for its $L^p$ norm, and the question of a dimension-free estimate on $L^p$, $p\neq 2$, for $M^*$ remains open. 
\end{remark}
\begin{remark}
	A necessary condition for a dimension-free boundedness of $M^*$ on $L^p$ is a dimension-free boundedness on $L^p$ of the single operator $M^1.$ This is also not clear to us. What makes matters more complicated is the fact that the kernel function $h_1$ is not non-negative. Thus, in certain aspects our context furnishes difficulties that were not present in the case of dimension-free estimates for Hardy--Littlewood maximal functions over convex sets studied in \cite{bou1,B2,B3,Car1,Mul1,SteinMax,SteinStro}. Indeed Hardy--Littlewood maximal operators are suprema over averaging operators, and each of these averaging operators is a positivity-preserving contraction on all $L^p$ spaces with $1\leqslant p\leqslant \infty.$ 
\end{remark}

By Corollary \ref{cor:fact}, \thref{thm} is equivalent to the following result.
\begin{theorem} \thlabel{thm'}
	Let $M^*$ be defined as in \eqref{eq:Mtmaxdef}. Then for every $f \in L^2(\R^d)$ we have
	\[
		\norm{M^* f}_2 \leqslant 2\cdot 10^8 \norm{f}_2.
	\]
\end{theorem}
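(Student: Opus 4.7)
The plan follows Bourgain's method for dimension-free maximal estimates \cite{bou1} and its refinements in \cite{bourgain_wrobel,mirek_stein}, combined with the dimension-free multiplier estimates for $m$ to be proved in \thref{lem2,lem3,lem4}. The first step is to split
\[
M^* f \leqslant \sup_{n \in \Z} \abs{M^{2^n} f} + \sup_{n \in \Z} \sup_{t \in [2^n, 2^{n+1}]} \abs{M^t f - M^{2^n} f},
\]
and bound the two terms in $L^2$ separately, in each case producing an absolute, dimension-free constant.

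For the dyadic supremum, insert the Poisson resolution of identity \eqref{eq:Sn} and reindex by $j = n - k$, obtaining
\[
\sup_{n \in \Z} \abs{M^{2^n} f} \leqslant \sum_{j \in \Z} \Bigl( \sum_{n \in \Z} \abs{M^{2^n} S_{n-j} f}^2 \Bigr)^{1/2}.
\]
By Plancherel, the square of the $L^2$-norm of the inner term equals $\int \abs{\hat f(\xi)}^2 \sum_n \abs{m(2^n \abs{\xi})}^2 \abs{\widehat{S_{n-j}}(\xi)}^2\, d\xi$. The goal is to find a constant $\varepsilon_j$ bounding the second factor uniformly in $\xi$, with $\sum_j \varepsilon_j$ summable and dimension-free. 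On the Fourier support of $S_{n-j}$ one has $2^n \abs{\xi} \sim \sqrt{d}\cdot 2^j$, so for $j \gg 0$ the dimension-free decay of $m$ at infinity (\thref{lem3}) yields geometric decay in $j$. For $j \ll 0$ the argument is small and $m$ is close to $m(0) = 1$, so the naive bound does not decay; the remedy is to subtract the Poisson semigroup by writing
\[
M^{2^n} S_{n-j} = (M^{2^n} - P_{2^n}) S_{n-j} + P_{2^n} S_{n-j},
\]
using that the multiplier $m(x) - e^{-x/\sqrt{d}}$ vanishes at the origin to extract decay in $\abs{j}$ from \thref{lem4}, and controlling $\sup_n \abs{P_{2^n} f}$ by $P_* f$ via \eqref{eq:P*g}.

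For the continuous oscillation, apply \eqref{eq:ineq} on each dyadic interval $[2^n, 2^{n+1}]$:
\[
\sup_{t \in [2^n, 2^{n+1}]} \abs{M^t f - M^{2^n} f} \leqslant \sqrt{2} \sum_{l=0}^\infty \Bigl( \sum_{m=0}^{2^l-1} \abs{(M^{t_{n,l,m+1}} - M^{t_{n,l,m}}) f}^2 \Bigr)^{1/2},
\]
with $t_{n,l,m} = 2^n + 2^{n-l} m$. After taking the $L^2$-norm of the $\ell^2$-sup over $n$ and invoking Plancherel, the problem reduces to bounding $\sum_n \sum_m \abs{m(t_{n,l,m+1}\abs{\xi}) - m(t_{n,l,m}\abs{\xi})}^2$ uniformly in $\xi$. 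A dimension-free Lipschitz-type bound on $m$ from \thref{lem4}, combined once more with a Poisson decomposition of $f$ to localize in frequency, should then yield summable decay of order $2^{-l/2}$ in $l$.

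The main obstacle is the first step, namely establishing \thref{lem2,lem3,lem4} dimension-freely: a uniform bound on $\abs{m}$, decay of $\abs{m}$ at infinity, and a Lipschitz-type smoothness estimate near the origin. These require delicate analysis of the Bessel integral \eqref{eq:m} combined with the sharp asymptotics of $\Gamma\bigl(\frac{d+1}{2}\bigr)$ via Stirling \eqref{eq:stirling} and Gautschi \eqref{eq:gautschi}, and the recurrence \eqref{eq:Beseq} for $J_{d/2}$. The delicate point is that the prefactor $2^{d/2}\Gamma((d+1)/2)/\sqrt{\pi}$ grows rapidly in $d$ and must be balanced exactly by the oscillatory cancellation inside the integral; once this balance is controlled uniformly in $d$, the dyadic machinery above delivers the absolute constant in \thref{thm'}.
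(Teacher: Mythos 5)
Your plan is essentially the paper's proof: the same splitting into the dyadic supremum plus the dyadic oscillation, the same comparison of $M^{2^n}$ with the Poisson semigroup (handled by $P_*$ and a square-function bound on the multiplier difference via \thref{lem2,lem3}), and the same use of the numerical inequality \eqref{eq:ineq} together with the Poisson resolution of identity and the derivative bound of \thref{lem4} for the oscillation term. The only differences are organizational: the paper does not need the $S_{n-j}$ decomposition for the dyadic part, since after subtracting $P_{2^n}$ the multiplier difference $m(2^n\abs{\xi})-e^{-2^n\abs{\xi}/\sqrt{d}}$ is already square-summable in $n$ uniformly in $\xi$ (and your version should perform that subtraction \emph{before} inserting the $S_{n-j}$, as the pieces $P_{2^n}S_{n-j}f$ themselves carry no decay in $j$), while in the oscillation part the paper makes explicit the interpolation between the $2^{-l}$ bound from \thref{lem4} and the $2^{l}2^{-\abs{k}}$ bound from \thref{lem2,lem3} that your sketch leaves implicit.
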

Till the end of this section we focus on proving \thref{thm'} and on the operator $M^*$.

First we prove estimates for the multiplier $m$. We start with small arguments.

\begin{lemma} \thlabel{lem2}
	For $0\leqslant x \leqslant \sqrt{d}$ we have
	\[
		\abs{m(x) - 1} \leqslant 20 \frac{x}{\sqrt{d}}.
	\]
\end{lemma}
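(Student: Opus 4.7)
The plan is to reduce the estimate to a single-variable integral bound and then apply two standard tools: the Mellin transform of a Bessel function (to identify $m(0)$) and Gautschi's inequality \eqref{eq:gautschi} (to kill the $\sqrt{d}$ factor).

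First I would verify that $m(0)=1$. This follows from the classical Mellin-type identity
\[
\int_0^\infty r^{-d/2}J_{d/2}(r)\,dr \;=\; \frac{\sqrt{\pi}}{2^{d/2}\Gamma\!\left(\tfrac{d+1}{2}\right)},
\]
which is exactly the reciprocal of the normalizing constant in \eqref{eq:m}. (The integral converges absolutely for $d\geqslant 4$ by the $J_{d/2}(r)\sim r^{d/2}/(2^{d/2}\Gamma(d/2+1))$ behaviour at $0$ and the $r^{-1/2}$ oscillatory decay at infinity.) With this in hand,
\[
m(x)-1 \;=\; -\frac{2^{d/2}\Gamma\!\left(\tfrac{d+1}{2}\right)}{\sqrt{\pi}}\int_0^{2\pi x} r^{-d/2}J_{d/2}(r)\,dr.
\]

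Next I would bound the integrand uniformly. Putting absolute values inside \eqref{eq:besdef} and evaluating the resulting Beta integral produces the standard inequality $|J_{d/2}(r)|\leqslant r^{d/2}/(2^{d/2}\Gamma(d/2+1))$, valid for every $r\geqslant 0$, so that $r^{-d/2}|J_{d/2}(r)|\leqslant 1/(2^{d/2}\Gamma(d/2+1))$. Integrating on $[0,2\pi x]$ gives
\[
|m(x)-1| \;\leqslant\; \frac{2\pi x\,\Gamma\!\left(\tfrac{d+1}{2}\right)}{\sqrt{\pi}\,\Gamma(d/2+1)}.
\]

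Finally I would apply Gautschi's inequality \eqref{eq:gautschi} with $x=d/2$ and $s=1/2$ to obtain $\Gamma((d+1)/2)/\Gamma(d/2+1)<(d/2)^{-1/2}=\sqrt{2/d}$. Substituting yields $|m(x)-1|\leqslant 2\sqrt{2\pi}\,x/\sqrt{d}$, and since $2\sqrt{2\pi}<6$, this is comfortably below $20\,x/\sqrt{d}$. Note that the restriction $x\leqslant\sqrt{d}$ is not actually needed for the estimate itself; it only guarantees the right-hand side remains of moderate size. There is no real obstacle in this argument — the only non-bookkeeping step is recognizing the Mellin identity that pins down $m(0)$, which is consistent with the fact that $m$ is the radial profile of the multiplier of the truncated Riesz transform and must degenerate to the full Riesz multiplier as $t|\xi|\to 0$.
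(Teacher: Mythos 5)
Your proof is correct, and it follows the same skeleton as the paper's: identify $m(0)=1$ from the Weber--Schafheitlin/Mellin evaluation of $\int_0^\infty r^{-d/2}J_{d/2}(r)\,dr$, rewrite $m(x)-1$ as the normalized integral over $[0,2\pi x]$, and use Gautschi's inequality \eqref{eq:gautschi} to convert the gamma quotient into $\sqrt{2/d}$. The one place you diverge is the bound on the integrand: the paper splits $J_{d/2}$ into its leading term plus the remainder $S_{d/2}$ (Grafakos B.6) and estimates the two pieces separately, which produces an extra quadratic term $4\pi x^2/d$ and is precisely why the hypothesis $x\leqslant\sqrt{d}$ is needed there. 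You instead use the single sharp bound $\abs{J_\nu(t)}\leqslant t^\nu/(2^\nu\Gamma(\nu+1))$, obtained by putting absolute values inside \eqref{eq:besdef} and evaluating the Beta integral; this is legitimate, slightly cleaner, yields the better constant $2\sqrt{2\pi}<6$, and, as you observe, makes the restriction $x\leqslant\sqrt{d}$ unnecessary for this lemma.
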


\begin{proof}
	By \cite[10.22.43]{nist} we know that $m(0) = 1$, so
	\begin{equation} \label{eq:m1}
		m(x) - 1 = m(x) - m(0) = -\frac{2^{\frac{d}{2}} \Gamma\left( \frac{d+1}{2} \right)}{\sqrt{\pi}} \int_0^{2\pi x} r^{-\frac{d}{2}} J_{\frac{d}{2}}(r) dr.
	\end{equation}
	Now \cite[B.6]{grafakos} gives
	\[
		J_\nu(x) = \frac{x^\nu}{2^\nu \Gamma(\nu+1)} + S_\nu(x)
	\]
	with $S_\nu$ satisfying
	\[
		\abs{S_\nu(x)} \leqslant \frac{2^{-\nu} x^{\nu+1}}{(\nu+1) \Gamma(\nu+\frac{1}{2}) \sqrt{\pi}}.
	\]
	Hence, using \eqref{eq:gautschi} we estimate \eqref{eq:m1} as follows (recall that $\frac{x}{\sqrt{d}} \leqslant 1$)
	\begin{align*}
		\abs{m(x) - 1} &\leqslant \frac{2\pi x \Gamma\left( \frac{d+1}{2} \right)}{\Gamma\left( \frac{d}{2}+1 \right) \sqrt{\pi}} + \frac{1}{\left(\frac{d}{2}+1 \right) \pi} \int_0^{2\pi x} r dr \\
		&\leqslant \frac{2\sqrt{2 \pi} x}{\sqrt{d}} + \frac{4 \pi x^2}{d} \leqslant 20\frac{x}{\sqrt{d}}.
	\end{align*}
\end{proof}

Our estimate for $m(x)$ when $x$ is large will be based on an inequality for the Bessel function $J_{\nu}.$ This is essentially a restatement of \cite[Lemma 4.1]{miszwr1}. We present the proof in order to keep track of the constants.

\begin{lemma} \thlabel{lemJ}
For each $t \geqslant 0$ and $\nu \geqslant 0$ we have
	\begin{equation*} 
		|J_{\nu}(t)| \leqslant \frac{2100 \, t^{\nu}}{2^{\nu}\Gamma\left( \nu + \frac{1}{2} \right) \sqrt{\nu \pi}} \left( e^{-\frac{t}{\sqrt{\nu}}} + e^{-\frac{\nu}{5}} \right).
		\end{equation*}
\end{lemma}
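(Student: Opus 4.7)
Start from the Poisson-type integral representation \eqref{eq:besdef}:
\[
J_\nu(t)=\frac{t^\nu}{2^\nu\Gamma(\nu+1/2)\sqrt{\pi}}\,I(t,\nu),\qquad I(t,\nu):=\int_{-1}^{1}e^{its}(1-s^2)^{\nu-1/2}\,ds.
\]
After dividing by the explicit prefactor the claim is equivalent to the single-variable estimate
\[
|I(t,\nu)|\leqslant\frac{2100}{\sqrt{\nu}}\bigl(e^{-t/\sqrt{\nu}}+e^{-\nu/5}\bigr),
\]
which I would prove by establishing two separate bounds on $|I|$ valid in overlapping ranges of $\nu$; in each range, one of the two terms on the right already dominates.

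The factor $e^{-t/\sqrt{\nu}}$ is produced by contour deformation. Choose branch cuts for $(1-s^2)^{\nu-1/2}$ along $(-\infty,-1]\cup[1,\infty)$, so the integrand is holomorphic on the open upper half-plane. Fix $\eta=1/\sqrt{\nu}$ and assume $\nu\geqslant 4$ so that $\eta\leqslant 1/2$. Cauchy's theorem on the rectangle with vertices $-1,1,1+i\eta,-1+i\eta$ rewrites $I(t,\nu)$ as the sum of integrals over the two short vertical sides and the horizontal segment $\Gamma_2=\{\sigma+i\eta:\sigma\in[-1,1]\}$. On $\Gamma_2$ one has $|e^{its}|=e^{-t/\sqrt{\nu}}$, and the identity
\[
|1-(\sigma+i\eta)^2|^2=(1+\eta^2)^2-2\sigma^2(1-\eta^2)+\sigma^4,
\]
combined with $\sigma^4\leqslant\sigma^2$ on $[-1,1]$, yields the Gaussian-type bound
\[
|1-(\sigma+i\eta)^2|^{\nu-1/2}\leqslant(1+\eta^2)^{\nu-1/2}\exp\!\Bigl(-\tfrac{(1-2\eta^2)(\nu-1/2)}{2(1+\eta^2)^2}\,\sigma^2\Bigr).
\]
Integrating in $\sigma$ and using $(1+1/\nu)^{\nu-1/2}\leqslant e$ bounds the contribution of $\Gamma_2$ by $C_1 e^{-t/\sqrt{\nu}}/\sqrt{\nu}$ for an absolute constant $C_1$; the two vertical sides contribute at most $O(\eta^{\nu+1/2})$, which is negligible.

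The factor $e^{-\nu/5}$ comes from a trivial estimate. Using $(1-s^2)^{\nu-1/2}\leqslant e^{-(\nu-1/2)s^2}$ on $[-1,1]$, or equivalently recognizing the integral as a beta function and applying Gautschi's inequality \eqref{eq:gautschi}, one obtains the unconditional bound $|I(t,\nu)|\leqslant\sqrt{\pi/\nu}$. This already fits inside $(2100/\sqrt{\nu})e^{-\nu/5}$ whenever $\nu\leqslant 5\log(2100/\sqrt{\pi})\approx 35$, which covers precisely the small-$\nu$ range complementary to the contour-shift estimate.

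The main obstacle is the sharp estimate on $\Gamma_2$: the naive sup-norm bound $|1-(\sigma+i\eta)^2|^{\nu-1/2}\leqslant(1+\eta^2)^{\nu-1/2}$ alone loses the crucial $1/\sqrt{\nu}$ factor, so a Laplace-type integration in $\sigma$ is required, which in turn forces $\eta\sim 1/\sqrt{\nu}$ in order that the Gaussian coefficient above remain positive and bounded away from zero. Once this estimate is in place, the book-keeping of branch cuts and constants is routine but has to be carried out carefully to make sure the combined constants from the two steps actually fit inside the slack $2100$.
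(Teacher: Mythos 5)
Your plan is correct and is essentially the paper's own proof: both arguments rest on shifting the contour of the Poisson integral representation upward (your height $1/\sqrt{\nu}$ in the variable $s\in[-1,1]$ corresponds exactly to the paper's height $1$ after its rescaling $s\mapsto s\sqrt{\nu}$) to extract the factor $e^{-t/\sqrt{\nu}}$, combined with the trivial beta-function bound $|I(t,\nu)|\leqslant\sqrt{\pi/\nu}$ to supply the $e^{-\nu/5}$ term in the complementary small-$\nu$ range. The remaining differences are cosmetic: the paper first discards the region $|s|\geqslant\sqrt{\nu}/2$ (in its rescaled variable) so that its rectangle stays strictly inside the disk of holomorphy, whereas you run the rectangle up to the branch points $\pm1$ and rely on the vanishing of $(1-s^2)^{\nu-1/2}$ there; and your single Gaussian bound on the top edge replaces the paper's two-piece estimate ($|s|\leqslant 5/2$ versus $|s|\geqslant 5/2$).
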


\begin{proof}
Define for $t\geqslant$ and $\nu\geqslant 0$ 
	\begin{equation} \label{eq:bessM}
		M(t) := \sqrt{\nu} \int_{-1}^1 e^{its\sqrt{\nu}} \left(1 - s^2 \right)^{\nu - \frac{1}{2}} ds= \int_{-\sqrt{\nu}}^{\sqrt{\nu}} e^{its} \left( 1 - \frac{s^2}{\nu} \right)^{\nu - \frac{1}{2}} ds. 
	\end{equation}
	Then, using the definition of the Bessel function \eqref{eq:besdef} we see that
	\[
J_\nu(t)= \frac{t^{\nu}}{2^{\nu}\Gamma\left( \nu + \frac{1}{2} \right) \sqrt{\nu \pi}}M\left(\frac{t}{\sqrt{\nu}}\right).
\]
Therefore in order to prove the lemma it suffices to show that 
	\begin{equation}
		\label{eq:lemJaim}
		\abs{M(t)} \leqslant 2100 \left( e^{-t} + e^{-\frac{\nu}{5}} \right), \quad t \geqslant 0,
	\end{equation}
and till the end of the proof we focus on justifying \eqref{eq:lemJaim}.

	We begin by splitting the second integral in \eqref{eq:bessM} into two parts
	\begin{equation} \label{eq:bess2}
		\abs{M(t)} \leqslant \abs{\int_{\frac{\sqrt{\nu}}{2} \leqslant \abs{s} \leqslant \sqrt{\nu}} e^{its} \left( 1 - \frac{s^2}{\nu} \right)^{\nu - \frac{1}{2}} ds} + \abs{\int_{\abs{s} \leqslant \frac{\sqrt{\nu}}{2}} e^{its} \left( 1 - \frac{s^2}{\nu} \right)^{\nu - \frac{1}{2}} ds}.
	\end{equation}
	Then we observe that
	\[
		\abs{\int_{\frac{\sqrt{\nu}}{2} \leqslant \abs{s} \leqslant \sqrt{\nu}} e^{its} \left( 1 - \frac{s^2}{\nu} \right)^{\nu - \frac{1}{2}} ds} \leqslant 2\sqrt{\nu} \left( \frac{3}{4} \right)^{\nu-\frac{1}{2}} \leqslant 6e^{-\frac{\nu}{4}}
	\] 
	since $1-\frac{s^2}{\nu} \leqslant \frac{3}{4}$ for $\abs{s} \geqslant \frac{\sqrt{\nu}}{2}$. This means that we can move on to estimating the second integral in \eqref{eq:bess2}. To do this we will change the contour of integration. However this will work only if $\nu > \frac{4}{3}$, so we take care of $\nu \leqslant \frac{4}{3}$ first. In this case we use \eqref{eq:gautschi} to write
	\begin{align*}
		\abs{\int_{\abs{s} \leqslant \frac{\sqrt{\nu}}{2}} e^{its} \left( 1 - \frac{s^2}{\nu} \right)^{\nu - \frac{1}{2}} ds} \leqslant \sqrt{\nu}\int_{-1}^1 \left( 1 - s^2 \right)^{\nu - \frac{1}{2}} ds 
		= \sqrt{\nu} \frac{\sqrt{\pi} \Gamma \left( \nu + \frac{1}{2} \right)}{\Gamma \left( \nu + 1 \right)} \leqslant \sqrt{\pi} \leqslant 3e^{-\frac{\nu}{5}}.
	\end{align*}

	Now assume that $\nu > \frac{4}{3}$ and let $\gamma=\gamma_{0}\cup\gamma_{1}\cup\gamma_2\cup\gamma_3$ be the rectangle with the parametrization
	\begin{align*}
		\gamma_0(s)& := s						&&\text{for} \quad s \in \left[-\tfrac{\sqrt{\nu}}{2}, \tfrac{\sqrt{\nu}}{2} \right],\\
		\gamma_1(s)& := is+\frac{\sqrt{\nu}}{2} 	&&\text{for} \quad s \in [0,1],\\
		\gamma_2(s)& := -s+i					&&\text{for} \quad s \in \left[-\tfrac{\sqrt{\nu}}{2}, \tfrac{\sqrt{\nu}}{2} \right],\\
		\gamma_3(s)& := i(1-s)-\frac{\sqrt{\nu}}{2}	&&\text{for} \quad s \in [0,1].
	\end{align*}
	The function $z \mapsto e^{itz} \left( 1 - \frac{z^2}{\nu} \right)^{\nu - \frac{1}{2}}$ is holomorphic in $\{z \in \C: \abs{z} < \sqrt{\nu}\}$ so for $\nu > \frac{4}{3}$ the Cauchy integral theorem gives
	\begin{align*}
		\abs{\int_{\abs{s} \leqslant \frac{\sqrt{\nu}}{2}} e^{its} \left( 1 - \frac{s^2}{\nu} \right)^{\nu - \frac{1}{2}} ds} &\leqslant \sum_{j \in \{1,3\}} \abs{\int_0^1 e^{it\gamma_j(s)} \left(1 - \frac{\gamma_j(s)^2}{\nu} \right)^{\nu - \frac{1}{2}} ds} \\
		&+ \abs{\int_{\abs{s} \leqslant \frac{\sqrt{\nu}}{2}} e^{it(i-s)} \left( 1 - \frac{(s-i)^2}{\nu} \right)^{\nu - \frac{1}{2}} ds}.
	\end{align*}
	The first term can be estimated as
	\begin{align*}
		\sum_{j \in \{1,3\}} \abs{\int_0^1 e^{it\gamma_j(s)} \left(1 - \frac{\gamma_j(s)^2}{\nu} \right)^{\nu - \frac{1}{2}} ds} \leqslant 2\left( \frac{3}{4} + \frac{1}{\nu} + \frac{1}{\sqrt{\nu}} \right)^{\nu - \frac{1}{2}} \leqslant 600 e^{-\frac{\nu}{5}}.
	\end{align*}
	Since $e^{it(i-s)} = e^{-t}e^{-its}$, now it suffices to show that
	\[
		\abs{\int_{\abs{s} \leqslant \frac{\sqrt{\nu}}{2}} e^{-its} \left( 1 - \frac{(s-i)^2}{\nu} \right)^{\nu - \frac{1}{2}} ds} \leqslant 2100.
	\]
	Recall that $\nu > \frac{4}{3}$ and observe that
	\[
		\abs{1 - \frac{(s-i)^2}{\nu}} \leqslant 1 - \frac{s^2-1}{\nu} + \frac{2\abs{s}}{\nu} \leqslant
		\begin{cases}
			1 + \frac{6}{\nu}, &\text{if } \abs{s} \leqslant \frac{5}{2} \\
			1 - \frac{s^2}{25 \nu} &\text{if } \frac{5}{2} \leqslant \abs{s} \leqslant \frac{\sqrt{\nu}}{2}
		\end{cases}
	\]
	and thus
	\begin{align*}
		\abs{\int_{\abs{s} \leqslant \frac{\sqrt{\nu}}{2}} e^{-its} \left( 1 - \frac{(s-i)^2}{\nu} \right)^{\nu - \frac{1}{2}} ds} &\leqslant 5\left( 1 + \frac{6}{\nu} \right)^{\nu - \frac{1}{2}} + \int_{\abs{s} \leqslant \frac{\sqrt{\nu}}{2}} \left( 1 - \frac{s^2}{25 \nu} \right)^{\nu - \frac{1}{2}} ds \\
		&\leqslant 5e^6 + 5\sqrt{\nu} \int_{-1}^1 \left(1 - s^2 \right)^{\nu - \frac{1}{2}} ds \\
		&= 5e^6 + 5\sqrt{\nu} \frac{\sqrt{\pi} \Gamma \left( \nu + \frac{1}{2} \right)}{\Gamma \left( \nu + 1 \right)} \\
		&\leqslant 5e^6 + 5\sqrt{\pi} \leqslant 2100.
	\end{align*}

This completes the proof of \eqref{eq:lemJaim} and thus also the proof of \thref{lemJ}.
	
\end{proof}

Applying \thref{lemJ} we now justify an estimate of $m$ for large arguments.

\begin{lemma} \thlabel{lem3}
	For $x \geqslant \sqrt{d}$ we have
	\[
		\abs{m(x)} \leqslant 6\cdot 10^4 \frac{\sqrt{d}}{x}.
	\]
\end{lemma}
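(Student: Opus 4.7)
The strategy is to integrate the defining integral $\int_{2\pi x}^\infty r^{-d/2} J_{d/2}(r)\,dr$ by parts \emph{before} invoking \thref{lemJ}. Direct substitution of the lemma's bound into this integral is stymied by the $e^{-\nu/5} = e^{-d/10}$ term, which does not decay in $r$ and so yields a non-integrable majorant on $[2\pi x,\infty)$. However, the Bessel identity \eqref{eq:Beseq} with $\alpha = d/2 - 1$ gives $\frac{d}{dr}\bigl(r^{-(d/2-1)} J_{d/2-1}(r)\bigr) = -r^{-(d/2-1)} J_{d/2}(r)$, so integration by parts with $u = r^{-1}$, $dv = r^{-d/2+1} J_{d/2}(r)\,dr$ (the boundary at infinity vanishing since $|J_\nu(r)| = O(r^{-1/2})$) produces
\[
\int_{2\pi x}^\infty r^{-d/2} J_{d/2}(r)\,dr = (2\pi x)^{-d/2} J_{d/2-1}(2\pi x) - \int_{2\pi x}^\infty r^{-d/2-1} J_{d/2-1}(r)\,dr.
\]

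The decisive gain is the extra factor $r^{-1}$ in the new integrand: both $e^{-r\sqrt{2/(d-2)}}$ and $e^{-(d-2)/10}$ are now integrable against $r^{-2}$ over $[2\pi x, \infty)$, with $\int_{2\pi x}^\infty r^{-2}\,dr = 1/(2\pi x)$ producing the target $1/x$ decay. I then apply \thref{lemJ} with $\nu = d/2 - 1$ to bound both $J_{d/2-1}(2\pi x)$ in the boundary term and $J_{d/2-1}(r)$ inside the residual integral. The prefactor $2^{d/2}\Gamma((d+1)/2)/\sqrt{\pi}$ appearing in the definition of $m(x)$ combines with $1/\bigl(2^{d/2-1}\Gamma((d-1)/2)\bigr)$ from \thref{lemJ} to give $(d-1)/\sqrt{\pi}$ via the exact half-integer identity $\Gamma((d+1)/2) = \tfrac{d-1}{2}\Gamma((d-1)/2)$; together with the $\sqrt{(d-2)\pi/2}$ in the Bessel denominator this yields the factor $(d-1)\sqrt{2}/(\pi\sqrt{d-2})$, which is of order $\sqrt{d}$ and produces exactly the target scaling.

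Since $x \geq \sqrt{d}$ we have $2\pi x \sqrt{2/(d-2)} \geq 2\pi\sqrt{2}$, so the exponential part $e^{-r\sqrt{2/(d-2)}}$ in \thref{lemJ} contributes at most $e^{-2\pi\sqrt{2}}$, while $e^{-(d-2)/10} \leq 1$ for $d \geq 4$. Putting these together with the constant $2100$ from \thref{lemJ} gives, without further effort, a bound of the form $|m(x)|\leq C\sqrt{d}/x$ with an explicit universal constant $C$ comfortably below $6\cdot 10^4$ (in fact on the order of a few hundred).

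The main obstacle, which turns out to be the main idea, is recognizing that a single integration by parts converts the obstructive $\int_{2\pi x}^\infty e^{-(d-2)/10}\,dr = \infty$ from the direct use of \thref{lemJ} into the harmless $\int_{2\pi x}^\infty r^{-2}\,dr = 1/(2\pi x)$; once this is done, the only remaining work is careful bookkeeping with the constants of \thref{lemJ} and the $\Gamma$-function identity above, with Gautschi's inequality \eqref{eq:gautschi} and Stirling's formula \eqref{eq:stirling} available if any further $\Gamma$-ratios need to be slackened.
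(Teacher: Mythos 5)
Your proof is correct, but it takes a genuinely different route from the paper. The paper handles the non-integrability of the $e^{-\nu/5}$ term in \thref{lemJ} by a case split: for $x\geqslant d$ it ignores \thref{lemJ} entirely and uses the crude bound $|J_{d/2}|\leqslant 1$, so that $\int_{2\pi x}^\infty r^{-d/2}\,dr\lesssim (2\pi x)^{1-d/2}$ and the factor $(2\pi d)^{-d/2}$ annihilates the $\Gamma\left(\frac{d+1}{2}\right)$ prefactor via Stirling; for $\sqrt{d}\leqslant x\leqslant d$ it splits the integral at $2\pi d$ and applies \thref{lemJ} only on the finite piece $[2\pi x,2\pi d]$, where the constant term $e^{-d/10}$ integrates to at most $2\pi d\,e^{-d/10}$, which is then absorbed using $e^{-y}\leqslant 1/y$. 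Your single integration by parts, via \eqref{eq:Beseq} with $\alpha=\frac{d}{2}-1$, gains the factor $r^{-1}$ that makes \thref{lemJ} directly usable on all of $[2\pi x,\infty)$ with $\nu=\frac{d}{2}-1\geqslant 1$; the boundary term and the residual integral each come out of order $\sqrt{d}/x$ after the exact cancellation $\Gamma\left(\frac{d+1}{2}\right)=\frac{d-1}{2}\Gamma\left(\frac{d-1}{2}\right)$ and $(d-1)/\sqrt{d-2}\leqslant\sqrt{d+1}$. I verified the bookkeeping: your boundary term is at most roughly $2\cdot 10^2\,\sqrt{d}/x$ and the residual integral at most roughly $4\cdot 10^2\,\sqrt{d}/x$, so your final constant is indeed a few hundred, well under $6\cdot 10^4$. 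What each approach buys: yours is uniform in $x\geqslant\sqrt{d}$, avoids the case analysis, and yields a smaller constant; the paper's avoids the integration by parts and the shift to $\nu=\frac{d}{2}-1$, at the price of the two-case structure and a larger constant.
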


\begin{proof}
	We consider two cases. First we take $x \geqslant d$. Recalling that $\abs{J_\nu(x)} \leqslant 1$ and $d \geqslant 4$ we can estimate the integral in \eqref{eq:m} by
	\[
		\abs{\int_{2\pi x}^\infty r^{-\frac{d}{2}} J_{\frac{d}{2}}(r) dr} \leqslant \int_{2\pi x}^\infty r^{-\frac{d}{2}} dr \leqslant 2 \frac{\left( 2\pi x \right)^{1-\frac{d}{2}}}{d-2} \leqslant 4 \frac{\left( 2\pi d \right)^{1-\frac{d}{2}}}{x}.
	\]
	Including the constant in \eqref{eq:m} and using \eqref{eq:stirling} for $d\geqslant 4$ gives
	\begin{equation} \label{eq1}
		\abs{m(x)} \leqslant 8\frac{\left( \pi d \right)^{1-\frac{d}{2}}}{x} \Gamma\left( \frac{d+1}{2} \right) \leqslant \frac{8}{x} \sqrt{2\pi} \pi d (2\pi e)^{-\frac{d}{2}} e^{\frac{1}{6(d+1)}} \leqslant \frac{1}{x} \leqslant \frac{\sqrt{d}}{x}.
	\end{equation}
	
	The second case is when $\sqrt{d} \leqslant x \leqslant d$. Then the integral in \eqref{eq:m} can be split into two parts: from $2 \pi x $ to $2 \pi d$ and from $2 \pi d$ to infinity; namely
	\begin{equation*} 
		m(x) = \frac{2^{\frac{d}{2}} \Gamma\left( \frac{d+1}{2} \right)}{\sqrt{\pi}} \left( \int_{2\pi x}^{2\pi d} r^{-\frac{d}{2}} J_{\frac{d}{2}}(r) dr + \int_{2\pi d}^\infty r^{-\frac{d}{2}} J_{\frac{d}{2}}(r) dr \right) = I_1(x) + I_2.
	\end{equation*}
	The second integral can be estimated as in \eqref{eq1}
	\[
		\abs{I_2} = \abs{m(d)} \leqslant \frac{\sqrt{d}}{d} \leqslant \frac{\sqrt{d}}{x}.
	\]
	To handle $I_1$, we use \thref{lemJ} which gives
	\begin{align*}
		\abs{I_1(x)} &\leqslant \frac{2100\sqrt{2}}{\pi\sqrt{d}} \left(\int_{2 \pi x}^{2 \pi d} e^{-\frac{r\sqrt{2}}{\sqrt{d}}} dr + \int_{2 \pi x}^{2 \pi d} e^{-\frac{d}{10}} dr \right) \\
		&\leqslant \frac{2100}{\pi} e^{-\frac{2\sqrt{2}\pi x}{\sqrt{d}}} + 4200\sqrt{2d} e^{-\frac{d}{10}} \leqslant 6 \cdot 10^4\frac{\sqrt{d}}{x}.
	\end{align*}
	In the last inequality we used the fact that $e^{-x} \leqslant \frac{1}{x}$ for $x \geqslant 0$.
\end{proof}
We will also need an estimate of the derivative of $m$.
\begin{lemma} \thlabel{lem4}
	For all $x \geqslant 0$ we have
	\[
		\abs{xm'(x)} \leqslant 10^4.
	\]
\end{lemma}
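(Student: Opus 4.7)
The plan is to differentiate \eqref{eq:m} by the fundamental theorem of calculus and obtain
\[
|xm'(x)|=\frac{2^{d/2}\Gamma\!\left(\tfrac{d+1}{2}\right)}{\sqrt{\pi}}\,(2\pi x)^{1-d/2}\,|J_{d/2}(2\pi x)|,
\]
and then to split $[0,\infty)$ into three ranges of $x$, applying a different bound on the Bessel function in each.

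For $0\leqslant x\leqslant\sqrt{d}$ I would use the elementary inequality $|J_\nu(t)|\leqslant t^\nu/(2^\nu\Gamma(\nu+1))$, which follows at once from \eqref{eq:besdef} and a beta-integral computation. With $\nu=d/2$ the factors of $2\pi x$ cancel, and a single application of Gautschi's inequality \eqref{eq:gautschi} reduces the estimate to $|xm'(x)|\leqslant 2\sqrt{2\pi}\,x/\sqrt{d}$, which is bounded by a small absolute constant in this range.

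For the middle range $\sqrt{d}\leqslant x\leqslant d$ I would substitute \thref{lemJ} (with $\nu=d/2$) into the displayed formula. After the change of variable $u=2\pi x\sqrt{2/d}$ the estimate collapses to something of the shape $C\,u\bigl(e^{-u}+e^{-d/10}\bigr)$ with an explicit $C$. The first term is controlled via the elementary maximum $\sup_{u\geqslant 0}ue^{-u}=1/e$; the second is handled using the cap $x\leqslant d$, which forces $u\leqslant 2\pi\sqrt{2d}$, so that $ue^{-d/10}\leqslant 2\pi\sqrt{2d}\,e^{-d/10}$ is uniformly bounded in $d\geqslant 4$ (its supremum is attained near $d=5$).

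For $x\geqslant d$ I would simply use the crude bound $|J_\nu(t)|\leqslant 1$. Since $1-d/2\leqslant -1$, the map $x\mapsto(2\pi x)^{1-d/2}$ is decreasing, so the worst case occurs at $x=d$; Stirling's inequality \eqref{eq:stirling} then bounds the resulting prefactor by roughly $d\bigl((d+1)/(2\pi d)\bigr)^{d/2}e^{-(d+1)/2}$, which is already much smaller than $1$ at $d=4$ and decays super-exponentially in $d$.

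The main obstacle is the middle region: the constant $2100$ inherited from \thref{lemJ} combines with the factor $\sqrt{2d}\,e^{-d/10}$ to give a bound on the order of a few thousand, so some careful arithmetic is needed to stay comfortably below $10^4$. The small-$x$ and large-$x$ regions produce much smaller contributions once the natural scaling $u=2\pi x\sqrt{2/d}$ has been isolated.
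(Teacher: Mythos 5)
Your proposal is correct and follows essentially the same route as the paper: differentiate under the integral, apply \thref{lemJ} in the intermediate range (where the dominant contribution $4200\sqrt{2d}\,e^{-d/10}\leqslant 4200\sqrt{10}/\sqrt{e}$ indeed forces the constant up to the order of $10^4$), and use $\abs{J_\nu}\leqslant 1$ together with Stirling for $x\geqslant d$. The only difference is your separate treatment of $0\leqslant x\leqslant\sqrt{d}$ via the series bound on $J_\nu$; this is harmless but redundant, since the bound $4200\sqrt{2}\,\tfrac{x}{\sqrt{d}}\bigl(e^{-2\sqrt{2}\pi x/\sqrt{d}}+e^{-d/10}\bigr)$ obtained from \thref{lemJ} already covers all of $x<d$.
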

\begin{proof}
	Differentiating \eqref{eq:m} gives
	\[
		m'(x) = -\frac{2\sqrt{\pi}\Gamma\left( \frac{d+1}{2} \right)}{(\pi x)^{\frac{d}{2}}} J_{\frac{d}{2}}(2\pi x).
	\]
	If $x \geqslant d$, then we can estimate $J_{\frac{d}{2}}$ by 1 and use \eqref{eq:stirling} to get
	\[
		\abs{xm'(x)} \leqslant \frac{2\Gamma\left( \frac{d+1}{2} \right)}{\pi^{\frac{d-1}{2}} d^{\frac{d}{2}-1 }} \leqslant 2\sqrt{2} \pi d (2\pi e)^{-\frac{d}{2}} e^{\frac{1}{6(d+1)}} \leqslant 3.
	\]
	Otherwise, when $x < d$, we use \thref{lemJ} which yields
	\begin{align*}
		\abs{xm'(x)} &\leqslant x\frac{2\sqrt{\pi}\Gamma\left( \frac{d+1}{2} \right)}{\left( \pi x \right)^{\frac{d}{2}}} \frac{2100 \sqrt{2} \, (2\pi x)^{\frac{d}{2}}}{2^{\frac{d}{2}}\Gamma\left( \frac{d+1}{2} \right) \sqrt{d \pi}} \left( e^{-\frac{2\sqrt{2} \pi x}{\sqrt{d}}} + e^{-\frac{d}{10}} \right) \\
		&= 4200\sqrt{2}\frac{x}{\sqrt{d}} \left( e^{-\frac{2\sqrt{2} \pi x}{\sqrt{d}}} + e^{-\frac{d}{10}} \right) \leqslant \frac{2100}{\pi} + \frac{4200\sqrt{10}}{\sqrt{e}} \leqslant 10^4.
	\end{align*}
\end{proof}

\section{Proof of Theorem \ref{thm'}} \label{sec:main}

Having established the technical results regarding the multiplier $m$, we move on to the proof of \thref{thm'}. We estimate $M^*$ as follows

\begin{equation} \label{eq:R*2}
	M^* f = \sup_{t > 0} \abs{M^t f} \leqslant \sup_{n \in \Z} \abs{M^{2^n} f} + \left( \sum_{n \in \Z} \sup_{t \in [2^n, 2^{n+1}]} \abs{M^t f - M^{2^n} f}^2 \right)^{1/2}.
\end{equation}

To bound the first part, we compare it with the maximal function of the Poisson semigroup
\begin{equation} \label{eq:R+}
	\sup_{n \in \Z} \abs{M^{2^n} f} \leqslant \sup_{n \in \Z} \abs{M^{2^n} f - P_{2^n} f} + \abs{P_* f}.
\end{equation}
Since by \eqref{eq:P*g} the norm of $P_*$ is bounded on $L^2$ by $4$, to estimate the first term in \eqref{eq:R*2} it is enough to take care of the first term in \eqref{eq:R+}.

\begin{theorem} \thlabel{th6}
For every $f\in L^2$ we have 
	\[
		\norm{\sup_{n \in \Z} \abs{M^{2^n} f}}_2 \leqslant 1.3\cdot 10^5 \norm{f}_2.
	\]
\end{theorem}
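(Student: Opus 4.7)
The decomposition \eqref{eq:R+} together with $\|P_* f\|_2 \leqslant 4\|f\|_2$ from \eqref{eq:P*g} reduces the task to bounding $\|\sup_{n\in\Z}|T_n f|\|_2$ by a dimension-free constant, where $T_n := M^{2^n} - P_{2^n}$. The symbol of $T_n$ is $m(2^n|\xi|) - e^{-2^n|\xi|/\sqrt{d}}$, and the first step I would take is to establish the unified pointwise multiplier bound
\[
|m(x) - e^{-x/\sqrt{d}}| \leqslant C_0 \min\!\bigl(x/\sqrt{d},\, \sqrt{d}/x\bigr), \qquad x \geqslant 0,
\]
with $C_0$ of order $6\cdot 10^4$. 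For $x\leqslant\sqrt{d}$ this follows from \thref{lem2} combined with the elementary bound $|e^{-x/\sqrt{d}}-1|\leqslant x/\sqrt{d}$; for $x\geqslant\sqrt{d}$ it follows from \thref{lem3} and the estimate $e^{-x/\sqrt{d}}\leqslant \sqrt{d}/x$. This expresses the fact that the symbol of $T_n$ vanishes at frequencies $|\xi|$ far from the scale $\sqrt{d}/2^n$.

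Next I would insert the Poisson resolution of the identity from \eqref{eq:Sn}: writing $f = \sum_{k\in\Z} S_k f$ and reindexing by $k = n-J$, one has $T_n f = \sum_{J\in\Z} T_n S_{n-J} f$. A weighted Cauchy--Schwarz inequality with weights $2^{\pm |J|/2}$ gives the pointwise estimate
\[
\sup_n |T_n f|^2 \leqslant 4\sum_{J\in\Z} 2^{|J|}\, \sup_n |T_n S_{n-J} f|^2,
\]
and replacing $\sup_n$ by $\sum_n$ on the right reduces matters, after taking $L^2$ norms, to proving
\[
\sum_{n\in\Z}\|T_n S_{n-J} f\|_2^2 \leqslant C_1\, 2^{-2|J|}\|f\|_2^2
\]
uniformly in $J\in\Z$, for some absolute $C_1$. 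Summing the resulting geometric series in $J$ then yields the required dimension-free bound.

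The heart of the argument is this last estimate. By Plancherel it is equivalent to the pointwise Fourier bound
\[
\sum_{n\in\Z} |m(2^n|\xi|) - e^{-2^n|\xi|/\sqrt{d}}|^2 \,|e^{-2^{n-J-1}|\xi|/\sqrt{d}} - e^{-2^{n-J}|\xi|/\sqrt{d}}|^2 \leqslant C_1\, 2^{-2|J|}
\]
valid for all $\xi\neq 0$. Substituting $u_n = 2^{n-J}|\xi|/\sqrt{d}$ the summand becomes
\[
|m(2^J\sqrt{d}\,u_n) - e^{-2^J u_n}|^2\cdot |e^{-u_n/2}-e^{-u_n}|^2,
\]
which by the key multiplier bound is at most $C_0^2\min(2^J u_n,\,2^{-J}/u_n)^2 \cdot |e^{-u_n/2}-e^{-u_n}|^2$. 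The Poisson-difference factor is sharply localised around $u_n\sim 1$, and at $u_n\sim 1$ the first factor is exactly of size $2^{-2|J|}$; a routine dyadic summation (split according to $u_n \lessgtr 2^{-|J|}$ and $u_n \lessgtr 1$) yields the claimed decay, with at worst a tame polynomial-in-$|J|$ loss that is absorbed into the geometric series.

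The main obstacle is purely quantitative. The functional-analytic structure is a standard Bourgain-type square-function argument, but to hit the explicit target $1.3\cdot 10^5$ one must carefully optimise the constants propagated from \thref{lem2,lem3} into $C_0$, then through the dyadic summation producing $C_1$, and finally through $\sum_J 2^{-|J|}$ and the Cauchy--Schwarz factor of $4$, adding the contribution from $\|P_*f\|_2 \leqslant 4\|f\|_2$ at the end.
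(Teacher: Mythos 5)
Your reduction via \eqref{eq:R+}, the bound $\norm{P_*f}_2\leqslant 4\norm{f}_2$, and the pointwise symbol estimate
\[
\abs{m(x)-e^{-x/\sqrt{d}}} \leqslant C_0\min\big(x/\sqrt{d},\,\sqrt{d}/x\big),\qquad C_0 = 6\cdot 10^4,
\]
are exactly the paper's starting moves, and the route you sketch does reach the conclusion. However, you then detour through the Poisson resolution $f=\sum_k S_k f$, a weighted Cauchy--Schwarz in $J$, and an off-diagonal decay estimate for $\sum_n\norm{T_nS_{n-J}f}_2^2$. None of this is needed for \thref{th6}. Once the symbol bound is in hand, the dyadic sum
\[
\sum_{n\in\Z}\min\Big(\frac{2^n\abs{\xi}}{\sqrt{d}},\ \frac{\sqrt{d}}{2^n\abs{\xi}}\Big)^{2}
\]
is already bounded by $4$ uniformly in $\xi\neq 0$ (two geometric series of ratio $1/4$), so the crude inequality $\norm{\sup_n\abs{T_nf}}_2^2\leqslant\sum_n\norm{T_nf}_2^2$, combined with Plancherel's theorem, gives $\norm{\sup_n\abs{T_nf}}_2\leqslant 2C_0\norm{f}_2$ in one step; adding $\norm{P_*f}_2\leqslant 4\norm{f}_2$ yields $1.3\cdot 10^5$. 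The Littlewood--Paley pieces $S_k$ and the $J$-summation are precisely the ingredients the paper reserves for the genuinely harder \thref{th7}, where the supremum ranges over the continuum $t\in[2^n,2^{n+1}]$ and the numerical inequality \eqref{eq:ineq} forces a finer analysis.

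One small inaccuracy in your version: the displayed claim $\sum_n\norm{T_nS_{n-J}f}_2^2\leqslant C_1 2^{-2\abs{J}}\norm{f}_2^2$ is slightly too strong. For $J\geqslant 0$ the product $\min(v,1/v)\cdot\min(2^{-J}v,\,2^{J}/v)$ equals $2^{-J}$ on the entire range $1\leqslant v\leqslant 2^{J}$, which contains roughly $J$ dyadic scales, so the correct bound carries an extra factor $(\abs{J}+1)$. You do flag this polynomial loss a sentence later, and it is harmless against the weight $2^{\abs{J}}$, so the argument closes — it simply requires more work, and a more delicate constant chase, than the direct estimate above.
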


\begin{proof}
	As noted above in order to prove the theorem it is enough to show that
	\[
		\norm{\sup_{n \in \Z} \abs{M^{2^n} f - P_{2^n} f}}_2 \leqslant 1.2\cdot 10^5\norm{f}_2.
	\]
	Estimating the supremum by the sum and using Plancherel's theorem we arrive at
	\begin{align*}
		&\norm{\sup_{n \in \Z} \abs{M^{2^n} f - P_{2^n} f}}_2^2 
		\leqslant \sum_{n \in \Z} \norm{M^{2^n} f - P_{2^n} f}_2^2 = \sum_{n \in \Z} \norm{\widehat{M^{2^n} f} - \widehat{P_{2^n} f}}_2^2.
	\end{align*}
	Recall that the multiplier symbol associated with $M^t$ is $m(t \abs{\xi})$ by \eqref{eq:Mt} and the one of the Poisson semigroup $P_t$ is $e^{-t \frac{\abs{\xi}}{\sqrt{d}}}$ by its definition \eqref{eq:poisson}. Combining these facts leads to
	\begin{align} \label{eq5}
		\sum_{n \in \Z} \norm{\widehat{M^{2^n} f} - \widehat{P_{2^n} f}}_2^2 &= \sum_{n \in \Z} \norm{\left( m(2^n \abs{\xi}) - e^{-2^n \frac{\abs{\xi}}{\sqrt{d}}} \right) \widehat{f}}_2^2.
	\end{align}
	Now we need to estimate the expression inside the norm. We split the analysis into two cases in order to use \thref{lem2} and \thref{lem3}. First assume that $2^n\abs{\xi} \leqslant \sqrt{d}$. Then by \thref{lem2} and the fact that $1-e^{-x} \leqslant x$ we have
	\begin{equation} \label{eq3}
		\abs{m(2^n \abs{\xi}) - e^{-2^n \frac{\abs{\xi}}{\sqrt{d}}}} \leqslant \abs{m(2^n \abs{\xi}) - 1} + \abs{e^{-2^n \frac{\abs{\xi}}{\sqrt{d}}} - 1} \leqslant 21 \frac{2^n \abs{\xi}}{\sqrt{d}}.
	\end{equation}
	If, on the other hand, $2^n \abs{\xi} \geqslant \sqrt{d}$, then we use \thref{lem3} and the fact that $e^{-x} \leqslant \frac{1}{x}$ to get
	\begin{equation} \label{eq4}
		\abs{m(2^n \abs{\xi}) - e^{-2^n \frac{\abs{\xi}}{\sqrt{d}}}} \leqslant 6\cdot 10^4 \frac{\sqrt{d}}{2^n \abs{\xi}}.
	\end{equation}
	Combining \eqref{eq3} and \eqref{eq4} gives
	\[
		\abs{m(2^n \abs{\xi}) - e^{-2^n \frac{\abs{\xi}}{\sqrt{d}}}} \leqslant 6\cdot 10^4 \min \left( \frac{2^n \abs{\xi}}{\sqrt{d}}, \left( \frac{2^n \abs{\xi}}{\sqrt{d}} \right)^{-1} \right).
	\]
	Squaring and summing over $n \in \Z$ leads to
	\[
		\sum_{n \in \Z} \abs{m(2^n \abs{\xi}) - e^{-2^n \frac{\abs{\xi}}{\sqrt{d}}}}^2 \leqslant 4\cdot (6\cdot 10^4)^2,\qquad \xi\in \R^d.
	\]
	Plugging the inequality above into \eqref{eq5} finally gives
	\[
		\sum_{n \in \Z} \norm{\left( m(2^n \abs{\xi}) - e^{-2^n \frac{\abs{\xi}}{\sqrt{d}}} \right) \widehat{f}}_2^2 \leqslant (1.2\cdot 10^5)^2 \norm{\widehat{f}}_2^2  = (1.2\cdot 10^5)^2 \norm{f}_2^2.
	\]
	This completes the proof of \thref{th6}.
\end{proof}

Now we estimate the norm of the second term in \eqref{eq:R*2}.

\begin{theorem} \thlabel{th7}
For every $f\in \mathcal S$ we have 
	\[
		\norm{\left( \sum_{n \in \Z} \sup_{t \in [2^n, 2^{n+1}]} \abs{M^t f - M^{2^n} f}^2 \right)^{1/2}}_2 \leqslant 1.7\cdot 10^8 \norm{f}_2.
	\]	
\end{theorem}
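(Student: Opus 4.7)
The plan is to apply the numerical inequality \eqref{eq:ineq} pointwise to the continuous function $t \mapsto M^t f(x)$ (continuity was observed just after \eqref{eq:Mtmaxdef}). Setting $t^-_{n,l,m} := 2^n + 2^{n-l}m$ and $t^+_{n,l,m} := t^-_{n,l,m} + 2^{n-l}$, this yields
\[
\sup_{t \in [2^n, 2^{n+1}]} |M^t f(x) - M^{2^n} f(x)| \leqslant \sqrt{2} \sum_{l=0}^\infty \bigg(\sum_{m=0}^{2^l-1} |M^{t^+_{n,l,m}} f(x) - M^{t^-_{n,l,m}}f(x)|^2\bigg)^{1/2}.
\]
Squaring, summing over $n \in \Z$, taking the $L^2(\R^d)$ norm and applying Minkowski's inequality in $\ell^2_n$ and then in $L^2_x$ to pull $\sum_l$ to the outside reduces \thref{th7} to the assertion that, for each $l \geqslant 0$,
\[
A_l := \bigg(\sum_{n\in\Z}\sum_{m=0}^{2^l-1}\|M^{t^+_{n,l,m}} f - M^{t^-_{n,l,m}}f\|_2^2\bigg)^{1/2} \leqslant \sqrt{C(l+1)\,2^{-l}}\,\|f\|_2,
\]
since the series $\sqrt{2}\sum_{l\geqslant 0}\sqrt{C(l+1)}\,2^{-l/2}$ then converges to a universal constant.

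By Plancherel's theorem $A_l^2 = \int_{\R^d} F_l(|\xi|)\,|\hat f(\xi)|^2\,d\xi$, where
\[
F_l(u) := \sum_{n\in\Z}\sum_{m=0}^{2^l-1}|m(t^+_{n,l,m}u) - m(t^-_{n,l,m}u)|^2,
\]
so the whole theorem reduces to the uniform dimension-free estimate $F_l(u) \leqslant C(l+1)\,2^{-l}$ for $u > 0$. To control a single summand I combine three facts; write $v_n := 2^n u/\sqrt{d}$. First, the fundamental theorem of calculus together with \thref{lem4} gives the uniform bound $|m(t^+u) - m(t^-u)| \leqslant 10^4\ln(t^+/t^-) \leqslant 10^4\cdot 2^{-l}$. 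Second, \thref{lem2} applied separately to $m(t^+u)$ and $m(t^-u)$ gives $|m(t^+u) - m(t^-u)| \leqslant 80\,v_n$ when $2^{n+1}u\leqslant \sqrt{d}$, and \thref{lem3} gives $|m(t^+u) - m(t^-u)| \leqslant 12\cdot 10^4/v_n$ when $2^n u\geqslant \sqrt{d}$, with the short transition band $v_n\in(1/2,1)$ absorbed into the uniform bound. Taking minima yields $|m(t^+u) - m(t^-u)|^2 \leqslant C\min(2^{-2l},\,v_n^2,\,v_n^{-2})$; the sum over the $2^l$ values of $m$ contributes a factor $2^l$, and summing the three-piece minimum over $n$ (where $v_n$ doubles with $n$) is a geometric-plus-arithmetic count giving $\sum_n\min(2^{-2l},v_n^2,v_n^{-2}) \leqslant C(l+1)2^{-2l}$, whence $F_l(u)\leqslant C(l+1)2^{-l}$ as required.

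The main technical obstacle is the case analysis for $F_l$ with explicit constants: one has to glue the three regimes consistently across the transition $v_n\sim 1$ and, most importantly, verify that in the central range $2^{-l}\leqslant v_n\leqslant 2^l$ the saturation by $2^{-l}$ accounts for only $O(l+1)$ summands, which is what produces the mild $(l+1)$ factor and guarantees summability in $l$. Once $F_l$ is under control, the remainder is book-keeping: combining the bound $A_l \leqslant \sqrt{C(l+1)2^{-l}}\,\|f\|_2$ with the convergent sum in $l$ and tallying the explicit constants $10^4$, $12\cdot 10^4$ and $80$ from \thref{lem2,lem3,lem4} produces the announced $1.7\cdot 10^8$.
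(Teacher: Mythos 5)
Your proposal is correct, and after the common first step (the numerical inequality \eqref{eq:ineq}) it diverges from the paper's argument in an interesting way. The paper inserts the Poisson resolution of the identity $f=\sum_k S_{k+n}f$ and then plays two estimates against each other over the double index $(l,k)$: a Plancherel bound of size $2^{l}2^{-|k|}$ coming from the $\min(v,v^{-1})$ decay of the symbols (Lemmas \ref{lem2} and \ref{lem3} for $m$, plus the analogous decay for the Poisson differences), and a smoothness bound of size $2^{-l}$ coming from Lemma \ref{lem4} together with the square-function estimate \eqref{eq:Snnorm}; taking the minimum restores summability in both $l$ and $k$. You avoid the $S_n$ decomposition entirely: after Minkowski you estimate the single multiplier sum $F_l(u)=\sum_n\sum_m|m(t^+u)-m(t^-u)|^2$ by combining the same three lemmas into one pointwise three-way minimum $C\min(2^{-l},v_n,v_n^{-1})$, and the key observation replacing the $k$-sum is the count that the plateau $2^{-l}\leqslant v_n\leqslant 2^l$, where the $2^{-l}$ bound saturates, contains only $O(l)$ values of $n$, while the tails are geometric. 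This yields $F_l(u)\leqslant C(l+1)2^{-l}$, which beats the factor $2^l$ from the $m$-sum and is summable in $l$ after taking square roots. Your gluing of the regimes is sound (in the transition band $v_n\in(1/2,1)$ the uniform bound $10^4\cdot2^{-l}$ is indeed comparable to the three-way minimum), and your route is, if anything, more economical: it needs neither $P_*$-type machinery nor \eqref{eq:Snnorm} for this theorem, and a careful tally of your constants actually lands well below $1.7\cdot10^8$.
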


\begin{proof}
	Since $f\in \mathcal S$, we see that for each $x\in \R^d$ the function $t\mapsto M^t f(x)$ is continuous. Hence, an application of the numerical inequality \eqref{eq:ineq} is legitimate. Using this inequality, the resolution of identity \eqref{eq:Sn} given by $S_n,$ and the triangle inequality on the space $L^2(\ell^2)$  we obtain
	\begin{align} \label{eq:th7}
		&\norm{\left( \sum_{n \in \Z} \sup_{t \in [2^n, 2^{n+1}]} \abs{M^t f - M^{2^n} f}^2 \right)^{1/2}}_2 \nonumber \\
		&\leqslant \sqrt{2} \sum_{l=0}^\infty \sum_{k \in \Z} \norm{\left( \sum_{n \in \Z} \sum_{m=0}^{2^l - 1} \abs{\left( M^{2^n + 2^{n-l} (m+1) } - M^{2^n + 2^{n-l} m } \right) S_{k+n} f}^2 \right)^{1/2}}_2.
	\end{align}
	Then we estimate the norm in the above expression in two ways.
	
	 Similarly to the previous proof, by Plancherel's theorem we have
	\begin{align*}
		&\norm{\left( \sum_{n \in \Z} \sum_{m=0}^{2^l - 1} \abs{\left( M^{2^n + 2^{n-l} (m+1) } - M^{2^n + 2^{n-l} m } \right) S_{k+n} f}^2 \right)^{1/2}}_2^2 \\
		&= \sum_{n \in \Z} \sum_{m=0}^{2^l - 1} \Big\lVert \left(m \left( (2^n + 2^{n-l} (m+1)) \abs{\xi} \right) - m \left( (2^n + 2^{n-l} m) \abs{\xi} \right) \right) \times \left( e^{-2^{n+k} \frac{\abs{\xi}}{\sqrt{d}}} - e^{-2^{n+k-1} \frac{\abs{\xi}}{\sqrt{d}}} \right) \widehat{f} \Big\rVert_2^2
	\end{align*}
	We estimate the first factor in the norm using \thref{lem2,lem3}
	\[
		\abs{m \left( (2^n + 2^{n-l} (m+1)) \abs{\xi} \right) - m \left( (2^n + 2^{n-l} m) \abs{\xi} \right)} \leqslant 3\cdot 10^5 \min\left( \frac{2^n \abs{\xi}}{\sqrt{d}}, \left( \frac{2^n \abs{\xi}}{\sqrt{d}} \right)^{-1} \right)
	\]
	and the second one by
	\[
		\abs{e^{-2^{n+k} \frac{\abs{\xi}}{\sqrt{d}}} - e^{-2^{n+k-1} \frac{\abs{\xi}}{\sqrt{d}}}} \leqslant 3\min\left( \frac{2^{n+k} \abs{\xi}}{\sqrt{d}}, \left( \frac{2^{n+k} \abs{\xi}}{\sqrt{d}} \right)^{-1} \right).
	\]
	The product of the right-hand sides of the two inequalities above can be further estimated by $10^6 \cdot 2^{-\abs{k}}$, which gives
	\begin{align} \label{eq:th71}
		&\norm{\left( \sum_{n \in \Z} \sum_{m=0}^{2^l - 1} \abs{\left( M^{2^n + 2^{n-l} (m+1) } - M^{2^n + 2^{n-l} m } \right) S_{k+n} f}^2 \right)^{1/2}}_2^2 \nonumber \\
		&\leqslant 10^{12} \cdot 2^{-\abs{k}} 2^l \int_{\R^d} \sum_{n \in \Z} \min\left( \frac{2^n \abs{\xi}}{\sqrt{d}}, \left( \frac{2^n \abs{\xi}}{\sqrt{d}} \right)^{-1} \right) \abs{\widehat{f}(\xi)}^2 d\xi \leqslant (2\cdot 10^6)^2 2^{-\abs{k}} 2^l \norm{f}_2^2.
	\end{align}
	
	For the second way of estimating \eqref{eq:th7} note that \thref{lem4} implies
	\begin{align*}
		&\abs{m \left( (2^n + 2^{n-l} (m+1)) \abs{\xi} \right) - m \left( (2^n + 2^{n-l} m) \abs{\xi} \right)} \\
		&\leqslant \int_{2^n + 2^{n-l} m}^{2^n + 2^{n-l} (m+1)} \abs{t \abs{\xi} m'(t\abs{\xi})} \frac{dt}{t} \leqslant 10^4 \cdot 2^{-l}.
	\end{align*}
	We use the above inequality, Plancherel's theorem and \eqref{eq:Snnorm} to continue \eqref{eq:th7} as follows
	\begin{align} \label{eq:th72}
		&\norm{\left( \sum_{n \in \Z} \sum_{m=0}^{2^l - 1} \abs{\left( M^{2^n + 2^{n-l} (m+1) } - M^{2^n + 2^{n-l} m } \right) S_{k+n} f}^2 \right)^{1/2}}_2^2 \nonumber \\
		& \leqslant 10^8 \cdot 2^{-l} \sum_{n \in \Z} \norm{S_{k+n} f}_2^2 \leqslant 10^8 \cdot 2^{-l} \norm{f}_2^2.
	\end{align}
	Putting \eqref{eq:th71} and \eqref{eq:th72} together we reach
	\begin{align*}
		\norm{\left( \sum_{n \in \Z} \sup_{t \in [2^n, 2^{n+1}]} \abs{M^t f - M^{2^n} f}^2 \right)^{1/2}}_2 &\leqslant 2\cdot 10^6 \sqrt{2} \sum_{l=0}^\infty \sum_{k \in \Z} 2^{-\frac{l}{2}} \min \left(1, 2^{l-\frac{\abs{k}}{2}} \right) \norm{f}_2 \\
		&\leqslant 1.7\cdot 10^8\norm{f}_2.
	\end{align*}

The proof of \thref{th7} is completed.
\end{proof}
In the light of \eqref{eq:R*2}, \thref{th6} and \thref{th7}, the proof of  \thref{thm} is concluded.

\section{Proof of Theorem \ref{thm2}}

	Throughout the proof we fix $p\in(1,\infty).$ The reasoning is a minor variation on \cite{rubio}. We use the method of rotations which lets us express the Riesz transform in terms of the Hilbert transform. For basic facts regarding Hilbert transforms used in the proof see \cite[Sections 5.1 and 5.2.3]{grafakos}. For a unit vector $\theta \in \R^d$ and $\varepsilon > 0$ define 
	\[
		H_\theta^\varepsilon f(x) = \frac{1}{\pi} \int_{\abs{t} > \varepsilon} f(x-t\theta) \frac{dt}{t} \quad \text{and} \quad H_\theta^* f(x) = \sup_{\varepsilon > 0} \abs{H_\theta^\varepsilon f(x)};
	\]
	here we take $x\in \R^d$ and $f\in \mathcal S.$ The latter operator is the {directional maximal Hilbert transform}.
	For a Schwartz function $g\colon \R \to \C$ we also let 
		\[
	H^\varepsilon g(x) = \frac{1}{\pi} \int_{\abs{t} > \varepsilon} g(x-t) \frac{dt}{t} \quad \text{and} \quad H^* g(x) = \sup_{\varepsilon > 0} \abs{H^\varepsilon g(x)}
	\]
	be the truncated Hilbert transform on $\R$ and the maximal Hilbert transform on $\R,$ respectively. Since $H^*$ is bounded on $L^p(\R)$ for $p \in (1, \infty)$, this is also the case for $H_\theta^*$ on $L^p(\R^d)$. Moreover, the $L^p(\R^d)$ norm of $H_\theta^*$ equals the $L^p(\R)$ norm of $H^*$. In particular $\|H_{\theta}^*\|_{p\to p}$ depends only on $p\in(1,\infty)$ and not on $\theta$ nor on the dimension $d.$ For details on these statements see the proof of \cite[Theorem 5.2.7]{grafakos}.	  
	
	Using \cite[5.2.20]{grafakos} we may write
	\[
		R_j^tf(x) = \frac{c_d \pi}{2} \int_{S^{d-1}} \theta_j H_\theta^t f(x) d\theta,
	\]
	where $c_d$ is defined in \eqref{eq:cd} and $S^{d-1}$ denotes the unit sphere in $\R^d$.
	Now observe that for every $x \in \R^d$ there exists $\lambda(x) = (\lambda_1, \dots, \lambda_d) \in S^{d-1}$ such that
	\[
		\abs{R^t f(x)} = \sum_{j=1}^d \lambda_j(x) R_j^t f(x) = \frac{c_d \pi}{2} \int_{S^{d-1}} \innprod{\lambda(x)}{\theta} H_\theta^t f(x) d\theta.
	\]
	Taking the supremum of both sides of the above equality we obtain
	\[
		\sup_{t > 0} \abs{R^t f(x)} \leqslant \frac{c_d \pi}{2} \int_{S^{d-1}} \abs{\innprod{\lambda(x)}{\theta}} H_\theta^* f(x) d\theta.
	\]
	Then we use H\"{o}lder's inequality with $\frac{1}{p}+\frac{1}{q} = 1$ and the fact that the spherical measure is invariant under rotations, which results in
	\begin{align*} 
		\sup_{t > 0} \abs{R^t f(x)} &\leqslant \frac{c_d \pi}{2} \left( \int_{S^{d-1}} \abs{\innprod{\lambda(x)}{\theta}}^q d\theta \right)^{1/q} \left( \int_{S^{d-1}} H_\theta^* f(x)^p d\theta \right)^{1/p} \nonumber \\
		&= \frac{c_d \pi}{2} \left( \int_{S^{d-1}} \abs{\theta_1}^q d\theta \right)^{1/q} \left( \int_{S^{d-1}} H_\theta^* f(x)^p d\theta \right)^{1/p}.
	\end{align*}
	The first integral can be estimated with the help of \cite[(1)]{rubio} which says that
	\[
		\int_{S^{d-1}} \abs{\theta_1}^q d\theta = S_{d-1} \frac{\Gamma \left( \frac{d}{2} \right) \Gamma \left( \frac{q+1}{2} \right)}{\sqrt{\pi} \Gamma \left( \frac{d+q}{2} \right)},
	\]
	where $S_{d-1}: = \frac{2 \pi^{d/2}}{\Gamma \left( \frac{d}{2} \right)}$ is the surface area of $S^{d-1}$. This leads to the estimate 
		\begin{align} \label{eq:vecR}
		\norm{\sup_{t > 0} \abs{R^t f(x)}}_p &\lesssim c_d S_{d-1}^{1/q} \left( \frac{\Gamma \left( \frac{d}{2} \right) \Gamma \left( \frac{q+1}{2} \right)}{\sqrt{\pi} \Gamma \left( \frac{d+q}{2} \right)} \right)^{1/q} \left( \int_{\R^d} \int_{S^{d-1}} H_\theta^* f(x)^p d\theta dx \right)^{1/p} \nonumber \\
		&= c_d S_{d-1}^{1/q} \left( \frac{\Gamma \left( \frac{d}{2} \right) \Gamma \left( \frac{q+1}{2} \right)}{\sqrt{\pi} \Gamma \left( \frac{d+q}{2} \right)} \right)^{1/q} \left( \int_{S^{d-1}} \norm{H_\theta^* f}_p^p d\theta \right)^{1/p} \nonumber \\
		&\leqslant c_d S_{d-1} \left( \frac{\Gamma \left( \frac{d}{2} \right) \Gamma \left( \frac{q+1}{2} \right)}{\sqrt{\pi} \Gamma \left( \frac{d+q}{2} \right)} \right)^{1/q} \norm{H^*}_{L^p(\R)\to L^p(\R)}\, \norm{f}_p.
	\end{align}
The symbol $\lesssim $ in the inequality above denotes an estimate up to a multiplicative factor independent of the dimension. Now we use Stirling's formula (more precisely inequality \eqref{eq:stirling}) to estimate the gamma functions above as follows
	\begin{align*}
		\frac{\Gamma \left( \frac{d}{2} \right) \Gamma \left( \frac{q+1}{2} \right)}{\Gamma \left( \frac{d+q}{2} \right)} &\leqslant \sqrt{2\pi} \frac{\left(\frac{d}{2}\right)^\frac{d-1}{2} e^{-\frac{d}{2} + \frac{1}{6d}} \left( \frac{q+1}{2} \right)^\frac{q}{2} e^{-\frac{q+1}{2} + \frac{1}{6(q+1)}}}{\left( \frac{d+q}{2} \right)^{\frac{d+q-1}{2}} e^{-\frac{d+q}{2}}} \\
		&\leqslant \sqrt{2\pi} \left(\frac{d}{2}\right)^\frac{d-1}{2} \left( \frac{q+1}{2} \right)^\frac{q}{2} \left( \frac{d}{2} \right)^{-\frac{d+q-1}{2}} e^{-\frac{1}{2} + \frac{1}{6d} + \frac{1}{6(q+1)}} \\
		&\leqslant \sqrt{2\pi} \left(\frac{d}{2}\right)^{-\frac{q}{2}} \left( \frac{q+1}{2} \right)^\frac{q}{2}.
	\end{align*}
	Using the estimate $c_d S_{d-1} \leqslant \sqrt{\frac{2d}{\pi}}$  (see \cite[(2)]{rubio}),  the above calculation, and the $L^p(\R)$ boundedness of $H^*$  we continue \eqref{eq:vecR} and obtain
	\begin{align*}
		\norm{\sup_{t > 0} \abs{R^t f(x)}}_p \lesssim C_p'\, 2^{\frac{1}{2q}} \sqrt{\frac{2d}{\pi}} \sqrt{\frac{2}{d}} \sqrt{\frac{q+1}{2}} \norm{f}_p  \leqslant C_p \norm{f}_p,
	\end{align*}
where $C_p'$ and $C_p$ are constants that depend only on $p\in(1,\infty).$
Thus, \thref{thm2} is justified.

\end{document}